\begin{document}

\title{Generalizing Reduced Rank Extrapolation to Low-Rank Matrix Sequences}

\author[$\ast$]{Pascal~den~Boef\orcidlink{0009-0004-9061-9284}} 
\author[$\dagger$]{Patrick~K\"urschner\orcidlink{0000-0002-6114-8821}} 
\author[$\ddagger$]{Xiaobo~Liu\orcidlink{0000-0001-8470-8388}} 
\author[$\ast$]{Joseph~Maubach\orcidlink{0009-0004-4538-0091}} 
\author[$\ddagger$]{Jens~Saak\orcidlink{0000-0001-5567-9637}} 
\author[$\ast$]{Wil~Schilders\orcidlink{0000-0001-5838-8579}} 
\author[$\ddagger$]{Jonas~Schulze\orcidlink{0000-0002-2086-7686}} 
\author[$\ast$]{Nathan~van~de~Wouw\orcidlink{0000-0002-6745-9137}} 

\affil[$\ast$]{Eindhoven University of Technology, Eindhoven, Netherlands.}
\affil[$\dagger$]{Leipzig University of Applied Sciences, Leipzig, Germany.}
\affil[$\ddagger$]{Max Planck Institute for Dynamics of Complex Technical Systems, Magdeburg, Germany.}

\abstract{\Ac{RRE} is an acceleration method typically used to accelerate the iterative solution of nonlinear systems of equations using a fixed-point process.
In this context, the iterates are vectors generated from a fixed-point mapping function.
However, when considering the iterative solution of large-scale matrix equations, the iterates are low-rank matrices generated from a fixed-point process for which, generally, the mapping function changes in each iteration.
To enable acceleration of the iterative solution for these problems, we propose two novel generalizations of \ac{RRE}.
First, we show how to effectively compute \ac{RRE} for sequences of low-rank matrices.
Second, we derive a formulation of \ac{RRE} that is suitable for fixed-point processes for which the mapping function changes each iteration.
We demonstrate the potential of the methods on several numerical examples involving the iterative solution of large-scale algebraic Lyapunov and Riccati matrix equations.
}

\keywords{extrapolation methods, reduced rank extrapolation (RRE), fixed-point iteration, nonlinear equations, large-scale matrix equations, cycling mode}

\msc{65B05, 
65H05, 
15A24, 
39B12 
}

\novelty{We propose novel extensions of \ac{RRE} for the iterative solution of large-scale sparse matrix equations: I) A scheme to efficiently implement \ac{RRE} for low-rank matrix iterates; and II) A scheme to apply \ac{RRE} for iterates generated by nonstationary fixed-point processes of the form~\eqref{eqn:xi_fp_iteration_dependent}.  

\maketitle


\section*{Code and data availability}

The algorithms implemented in this paper and the data sets analyzed are available at:

\begin{center}
	\url{https://doi.org/10.5281/zenodo.14103908}
\end{center}

\section*{Acknowledgments}

This work was supported by ECSEL Joint Undertaking under grant agreement number 101007319 (AI-TWILIGHT).
We thank
Eda Oktay\orcidlink{0009-0000-8943-6908} and Fan Wang\orcidlink{0009-0000-8943-6908} 
for their review of our codes and independent verification of the reproducibility of the numerical experiments.

\section{Introduction}
\Ac{RRE} was first introduced, independently, in~\cite{eddy1979extrapolating, kaniel1974least, mevsina1977convergence}.
It can be considered as a general technique to find the limit (or anti-limit) of a sequence of vectors
$\sequence{x_i}\subseteq\mathbb{R}^{d}$, and is most often applied in and analyzed based on the scenario that the iterates are generated by a fixed-point process $f$, i.e.,
\begin{equation}
	x_{i+1} = f(x_i)
	\label{eqn:xi_fp}
\end{equation}
with $i\in\NN$ and $x_1$ given.
The recent survey by Saad~\cite{saad2025survey} discussed various methods for accelerating fixed-point iterations on vectors, including RRE.
These scenarios arise most notably when considering the iterative solution of the nonlinear system of equations (which describe a fixed point of~\eqref{eqn:xi_fp})
\begin{equation}
	\mathscr F(x) = 0.
	\label{eqn:F(x)=0}
\end{equation}
However, the iterates can be generated by a process different from~\eqref{eqn:xi_fp}, for example:
\begin{enumerate}
	\item if the iterates are (possibly low-rank) matrices $\sequence{X_i}$ with $X_i \in \mathbb{R}^{d \times d}$ given in a factorized form, which shall be retained during the iteration; and
	\item if the iterates are generated by a fixed-point process that (although it still converges to a fixed-point $x_i \rightarrow x$) is now \emph{nonstationary}:
		\begin{equation}
			x_{i+1} = f_i(x_i),
			\label{eqn:xi_fp_iteration_dependent}
		\end{equation}
		with $x = f_i(x)$ for all $i = 1, 2, \dots$, and $x$ a fixed point.
\end{enumerate}
Two well-known examples in the first case are the Lyapunov equation and \ac{ARE}, where matrix iterates $\sequence{X_i}$ are associated with the iterative (matrix) solution to a matrix equation.
These equations play a prominent role in various problems in the field of
control theory, such as optimal control, model reduction and state estimation;
see, e.g.,~\cite[Section~3]{simoncini2016computational} and references therein.
The second scenario~\eqref{eqn:xi_fp_iteration_dependent} occurs naturally when we are iteratively solving a system of equations using a different preconditioner at every step, such as considered in~\cite{saad1993flexible}.
In that work, the author proposes an extension to \ac{GMRES} called \ac{FGMRES} which is more flexible in the sense that a different preconditioner at every step is supported.
(Similarly, later in Section~\ref{sct:flexible_rre}, we will extend \ac{RRE} to a flexible variant that supports nonstationary fixed-point processes of the form~\eqref{eqn:xi_fp_iteration_dependent}.)

There are practical applications in which both of the above scenarios occur simultaneously,
for example, when iteratively solving large-scale continuous-time Lyapunov equations using the \ac{ADI} method~\cite{penzl1999cyclic,li2002low,SchS24}. 
The first formulation of the \ac{ADI} that iterates the residual alongside is
shown in~\cite{morBenKS13}, and~\cite{WolP16} shows its reinterpretation in the
Krylov subspace projection setting. Both point the way to a generalization for solving the \ac{ARE}s: the RADI~\cite{benner2018radi}, for which both scenarios above occur
and the entire solution process can be viewed as an iterative process
of the form~\eqref{eqn:xi_fp_iteration_dependent} where, both, the iterates are
low-rank matrices \emph{and} the generating process changes in each iteration.

\Ac{RRE} has been applied in the context of large-scale matrix equations arising in transport theory in~\cite{el2013vector}.
The authors consider nonsymmetric \ac{ARE}s where the coefficients have a special structure
which allows the solution to be represented as $X = T \circ (u v^\TT)$ for a given matrix $T$ and two vectors $u, v$.
\chadded{Here $\circ$ refers to the element-wise product.}
Their numerical experiments show that \ac{RRE} significantly speeds up the standard iterative process, illustrating the potential of extrapolation methods in the context of matrix equations.

In this work, we derive efficient schemes (Algorithm~\ref{alg:lowrank_rre}) to implement \ac{RRE} for low-rank matrix iterates and to apply \ac{RRE} for iterates generated by nonstationary fixed-point processes of the form~\eqref{eqn:xi_fp_iteration_dependent} (Algorithm~\ref{alg:nonstationary_rre}). Combining both extensions, we also demonstrate the acceleration potential of RRE for general large-scale sparse \ac{ARE}s for which the solution admits an effective low-rank approximation (Algorithm~\ref{alg:nonstationary_lowrank_rre}).

The rest of the paper is organized as follows.
In Section~\ref{sct:preliminary}, we briefly introduce the formulation of \ac{RRE} that is commonly used in literature.
The generalization of this formulation is presented in
Section~\ref{sct:method}, with its application to solving large-scale matrix
equations subsequently discussed in Section~\ref{sct:application}.
In Section~\ref{sct:numerical_examples}, we demonstrate the potential of the method on several numerical examples of large-scale matrix equations.
Finally, we provide conclusions and an outlook in Section~\ref{sct:conclusion}.

Throughout the paper, we use $\cdot^\TT$ to denote real transposition.
$\norm{\cdot}$ may denote the spectral or Frobenius norm if the argument is a matrix,
and the Euclidean norm if the argument is a vector.
$\abs{\cdot}$ denotes the absolute value of a scalar.
The vector of all ones is denoted as $\ones$.
$\NN$ is the set of positive integers,
$\RRneg$ is the negative real axis (excluding the origin),
and $\mathbb{R}^{m\times n}$ is the set of real-valued
$m$-by-$n$ matrices.
$\machineprecision$ denotes the machine precision in IEEE double precision floating-point arithmetic.
We use $\blkdiag(A_1,A_2,\dots,A_k)$ to denote the block diagonal matrix formed by aligning the block $A_1,A_2,\dots,A_k$ along the main diagonal.


\section{Preliminaries on RRE}\label{sct:preliminary}

In this section, we give a brief derivation of the method in the context of solving the linear system
\begin{equation}
	A x = b.
	\label{eqn:Ax=b}
\end{equation}
The extension of \ac{RRE} to the more general case of iteratively solving the nonlinear system~\eqref{eqn:F(x)=0} is simple in the sense that the method and algorithm stay the same (although convergence guarantees are no longer easy to derive).
We assume that $A$ is regular,
i.e., system~\eqref{eqn:Ax=b} permits a unique solution.

In many applications, the matrix $A$ is too large to be explicitly stored
or for its inverse $A^{-1}$ to be computed.
Instead, only the result of $Ay$
for a given vector $y$ may be computed.
In that case, one could attempt to find the solution $x$
by iterating the general linear fixed-point procedure
\begin{equation}
\begin{aligned}
	x_{i+1} = f(x_i)
	:={}& M^{-1} (N x_i + b) \\
	={}& x_i - M^{-1} (Ax_i - b)
\end{aligned}
\label{eqn:xi+1=Axi+b}
\end{equation}
with $A = M - N$ and an initial vector $x_1$.
The final formulation is often referred to as the second normal form of a consistent linear iteration~\cite[Section~2.2.2]{Hac16}.
There are two practical issues associated with the fixed-point iteration~\eqref{eqn:xi+1=Axi+b}.
First, the iteration~\eqref{eqn:xi+1=Axi+b} converges if and only if the eigenvalues of $M^{-1}N$ are on the open unit disk~\cite[Section~11.2.3]{golub2013matrix}.
Second, even if the spectral radius of $M^{-1}N$ is strictly less than 1, the
convergence of the iteration~\eqref{eqn:xi+1=Axi+b} can be slow when the radius
is close to 1.
The method of \ac{RRE} can mitigate both these issues, in the sense that it only requires most eigenvalues of $M^{-1}N$ have modulus sufficiently small (we will make this statement more precise in the sequel).

At this point,
there are two notions of a residual:
the residual of the underlying equation~\eqref{eqn:Ax=b},
\begin{align}
	\reseq(y) &:= \mathscr F(y) = b - Ay
	,
	\label{eqn:residual:equation}
\intertext{%
and the residual of the fixed-point iteration~\eqref{eqn:xi+1=Axi+b},
}
	\resit(y) &:= f(y) - y = M^{-1} \reseq(y)
	.
	\label{eqn:residual:iteration}
\end{align}
Clearly, we have that \(\reseq(x) = \resit(x) = 0\) for the solution~$x$ of~\eqref{eqn:Ax=b}.
Most derivations of \ac{RRE} are only concerned with the case $M=I$,
for which both these notions of the residual coincide,
e.g.,~\cite{
	mevsina1977convergence, 
	eddy1979extrapolating,  
	sidi1988extrapolation,  
	sidi1991efficient,      
	sidi1998upper}          
\unskip,
or with the residual of the iteration~\eqref{eqn:residual:iteration},
e.g.,~\cite{
	kaniel1974least,        
	sidi2020convergence}    
\unskip.
For the moment, we only consider the case that $\reseq(\cdot) = \resit(\cdot)$ everywhere.
\unskip\footnote{We will lift this restriction in Section~\ref{sct:flexible_rre}.}

The main idea of all extrapolation methods in general is to construct an extrapolant $\widehat{x}$ that is a linear combination of $n$ (which is fixed) iterates $\{ x_i \}_{i=1}^{n}$, i.e.,
\begin{equation}
	\widehat{x} = \sum_{i=1}^{n} \gamma_i x_i,
	\label{eqn:rre_extrapolant}
\end{equation}
for some weights $\gamma_i$, $i = 1,\ldots, n$.
The difference among different extrapolation methods (such as \ac{RRE}) lies in how these weights are determined.
For \ac{RRE}, the weights are chosen to minimize the
residual~\eqref{eqn:residual:iteration} of a convex combination of iterates,
\begin{equation}
	\gamma = \argminst{%
		g\in\mathbb R^n
	}{%
		\norm{ \resit \left( \sum_{i=1}^{n} g_i x_i \right) }
	}{%
		\sum_{i=1}^{n} g_i = 1.
	}
	\label{eqn:residual_extrapolant}
\end{equation}
The convexity constraint imposed on the weights $\gamma_i$,
\begin{equation}
	\sum_{i=1}^{n} \gamma_i = 1,
	\label{eqn:gamma_sum_to_1}
\end{equation}
is an assumption that is shared by almost all extrapolation methods (not just vector extrapolation) using linear combinations of iterates~\cite[Section~0.3]{sidi2003practical}.
The consequence of this assumption is that the extrapolation method becomes shift-independent.
That is, shifting all iterates by some constant term $\bar{x}$ also shifts the extrapolant by $\bar{x}$.
Furthermore,
using~\eqref{eqn:gamma_sum_to_1},
it is easy to verify that
\begin{equation}
	\reseq\left(\sum_{i=1}^n \gamma_i x_i\right) = \sum_{i=1}^n \gamma_i \reseq(x_i)
	\qquad\text{and}\qquad
	\resit\left(\sum_{i=1}^n \gamma_i x_i\right) = \sum_{i=1}^n \gamma_i \resit(x_i)
\end{equation}
hold; irrespective of \(M\).
By the observation that $\resit(x_i) = x_{i+1} - x_i$,
we can thus write the extrapolant's residual as
\begin{equation}
	\resit\left( \sum_{i=1}^{n} \gamma_i x_i \right)
	= \sum_{i=1}^{n} \gamma_i (x_{i+1} - x_i).
\end{equation}
Note that in the final term of the sum ($i=n$),
we need the additional term $x_{n+1}$ to evaluate $\resit(x_i)$.
This final term is used commonly in literature to express the basic optimization step involved in \ac{RRE}~\cite{el2013vector, kaniel1974least, sidi2020convergence}:
\begin{equation}
	\gamma = \argminst{%
		g\in\mathbb R^n
	}{%
		\norm{ \sum_{i=1}^{n} g_i \left( x_{i+1} - x_i \right) }
	}{%
		\sum_{i=1}^{n} g_i = 1.
	}
	\label{eqn:rre_opt}
\end{equation}
The optimization~\eqref{eqn:rre_opt} is convex and can be solved efficiently using a QR-decomposition as outlined in~\cite{sidi1991efficient}.
Once solved, the coefficients $\gamma$ are used to obtain the extrapolant~\eqref{eqn:rre_extrapolant}.
We summarize this procedure in Algorithm~\ref{alg:rre}.

\begin{algorithm}
	\caption{%
		\ac{RRE} for stationary processes~\eqref{eqn:xi_fp}
		in standard formulation~\eqref{eqn:rre_opt}.
	}%
	\label{alg:rre}
	\KwIn{%
		Iterates $x_1, \ldots, x_n, x_{n+1}$.
	}
	\KwOut{%
		Extrapolant~$\widehat x$.
	}
	\medskip
	\Fn{$\RRE_\Delta(x_1, \ldots, x_n, x_{n+1})$}{%
		Initialize matrix of differences $U \gets [
			x_2 - x_1 | \dots | x_{n+1} - x_n
		] \in\mathbb R^{d\times n}$\;
		$\gamma\gets\RRE(U)$\;
		$\widehat x \gets \sum_{i=1}^n \gamma_i x_i$\;
	}
	\medskip
	\Fn{$\RRE(U)$}{%
		Compute $R\in\mathbb R^{n\times n}$ of economy-size QR-decomposition $U = QR$\;
		Solve $R^\TT R \alpha = \ones$ for $\alpha\in\mathbb R^n$\;
		$\gamma\gets\alpha / \sum_{i=1}^n \alpha_i$\;
		\Return{$\gamma$}\;
	}
\end{algorithm}

\begin{algorithm}
	\caption{Non-cycling \ac{RRE} mode.}\label{alg:noncycling_rre}
	\KwIn{Initialization $x_1$, window size $n \in \NN$.}
	\KwOut{Extrapolated sequence (of extrapolated iterates only) $\widehat{x}_{n}, \widehat{x}_{n+1}, \ldots$}
	\For{$i\gets 1, 2, \ldots$}{%
		$x_{i+1} \gets f(x_i)$\;
		\If{$i\geq n$}{%
			$\widehat{x}_i \gets \RRE_\Delta(x_{i-n+1}, \ldots, x_i, x_{i+1})$\;
			\lIf{$\widehat{x}_i$ converged}{break}
		}
	}
\end{algorithm}

\begin{algorithm}
	\caption{Cycling \ac{RRE} mode.}\label{alg:cycling_rre}
	\KwIn{%
		Initialization $x_1$,
		window size $n \in \NN$.
	}
	\KwOut{%
		Extrapolated sequence $x_1, x_2, \ldots$
	}
	\For{$i\gets 1, 2, \ldots$}{%
		$x_{i+1} \gets f(x_i)$\;
		\lIf{should start new cycle}{$x_{i+1} \gets \RRE_\Delta(x_{i-n+1}, \ldots, x_i, x_{i+1})$}%
		\label{alg:cycling_rre:new_cycle}
		\lIf{$x_{i+1}$ converged}{break}
	}
\end{algorithm}

\subsection{Application modes}

In extrapolation methods, typically the extrapolation step is applied multiple times according to the application mode.
We will here introduce two such application modes.

The first application mode we discuss is the \emph{non-cycling} \ac{RRE} mode (Algorithm~\ref{alg:noncycling_rre}), which is also referred to as $n$-Mode in~\cite{sidi2020convergence}.
In this mode, we compute (for fixed window size $n$) a sequence $\widehat{x}_n, \widehat{x}_{n+1}, \dots$,
until convergence is achieved.
This mode can be used in a non-intrusive context in the sense that it does not require knowledge of the underlying process generating these iterates.
This is a necessity, for example, in the case of sequences being generated using commercial software.
For the scalar case, this mode is similar to Aitken's $\Delta^2$ method~\cite{aitken1927bernoulli} in the sense that a window (of fixed size) is shifted over the sequence of iterates to produce another sequence of iterates.

For some problems, the non-cycling scheme only provides a modest speed up in terms of convergence rate.
To improve this, a second application mode can be used: \emph{cycling} \ac{RRE} mode (Algorithm~\ref{alg:cycling_rre}).
In each cycle, the fixed-point iteration~\eqref{eqn:xi_fp} is initialized using the extrapolant of the previous cycle.
Since the extrapolant is used to restart the fixed-point process, the mapping~$f$ must be known.
A new cycle may start, for example, if $n$ divides $i$ (line~\ref{alg:cycling_rre:new_cycle}).
Some convergence properties of cycling \ac{RRE} mode for the iterative solutions of linear and nonlinear equations are studied in~\cite{Sidi1992upper, sidi2020convergence}.

\subsection{Connections to other iterative methods}

Finally, we remark that in the case of solving linear systems using iterative methods, there exist strong connections between these application modes of vector extrapolation and other iterative methods.
In the context of Krylov subspace methods, it was shown in~\cite{sidi1988extrapolation} that cycling, respectively non-cycling, \ac{RRE} is equivalent to the restarted, respectively non-restarted, method of \ac{GMR}.
Several (mathematically equivalent) implementations of \ac{GMR} have been proposed in literature such as in~\cite{axelsson1980conjugate, eisenstat1983variational, young1980generalized} and the commonly used \ac{GMRES}~\cite{saad1986gmres}.
\Ac{RRE} is also equivalent to the method of \ac{DIIS} which was first proposed in~\cite{pulay1980convergence} to accelerate the iterative solutions of linear systems of equations.
An improved version focused on the \ac{SCF} method is given in~\cite{pulay1982improved}.
The numerical properties of several implementations of \ac{DIIS} are compared in~\cite{shepard2007some}.

\section{Methodological extensions of RRE}\label{sct:method}

In this section, we discuss two extensions to \ac{RRE} that make it suitable for iterative solvers for large-scale matrix equations: (i) In Section~\ref{sct:matrix_rre}, we treat the case that the iterates consist of low-rank matrices; and (ii) In Section~\ref{sct:flexible_rre}, we treat the case that the iterates are generated from a fixed-point iteration with nonstationary mapping~\eqref{eqn:xi_fp_iteration_dependent}. 

These two extensions are generic in nature and can be applied independently.
Nevertheless, in Section~\ref{sct:application} we discuss in more detail the application of RRE with both extensions to the important case of iteratively solving large-scale matrix equations.

\subsection{RRE for low-rank matrix sequences}\label{sct:matrix_rre}
Similar to how \ac{RRE} is used to accelerate the iterative solutions of systems with vector solutions, we are interested in accelerating the iterative solution of the matrix-valued system
\begin{equation}
	\mathscr F(X) = 0.
	\label{eqn:F(X)=0}
\end{equation}
We assume that a solution $X \in \mathbb{R}^{d \times d}$ exists and is symmetric.
The symmetry assumption is for convenience of notation only: the method can be extended to the non-symmetric case as will be briefly discussed at the end of this section.
The function $\mathscr F: \mathbb{R}^{d \times d} \rightarrow \mathbb{R}^{d \times d}$ in equation~\eqref{eqn:F(X)=0} may be nonlinear.
We consider an iterative solution process for~\eqref{eqn:F(X)=0} by the fixed-point iteration:
\begin{equation}
	X_{i+1} = F(X_i)
	\label{eqn:X=F(X) fp}
\end{equation}
with initialization $X_1$ and $F: \mathbb{R}^{d \times d} \rightarrow \mathbb{R}^{d \times d}$.
Note that convergence of the iteration~\eqref{eqn:X=F(X) fp} to~$X$ depends on~$F$ and the initialization.
For example, for the matrix equations we consider in this article, convergence is guaranteed under mild technical assumptions (which will be discussed further in Section~\ref{sct:application}).

One immediate possibility is to apply \ac{RRE} to the vectorized sequence of matrices $\sequence{\vec(X_i)}$,
where $\vec(\cdot)$ stacks the columns of its argument into a vector.
However, this is not possible for large-scale problems where the iterates are too large (i.e., $d$ is too large) to fit in memory.
Instead, iterative solvers of large-scale matrix equations, such as RADI, exploit the fact that, for many practical problems, the desired solution can be well-approximated by a low-rank matrix~\cite{benner2016solution}.
Such solvers produce low-rank iterates in factorized form:
\begin{equation}
	X_i = Z_i D_i Z_i^\TT,
	\label{eqn:Xi_lr_decomp}
\end{equation}
with $Z_i \in \mathbb{R}^{d \times z_i}$, $D_i = D_i^\TT \in \mathbb{R}^{z_i \times z_i}$ and $z_i \ll d$.
Algorithmically, the fixed-point iteration~\eqref{eqn:X=F(X) fp} is implemented by only operating on the low-rank factors (for example, \ac{ADI} and RADI).
That is, conceptually,
\begin{equation}
	(Z_{i+1}, D_{i+1}) = F(Z_i, D_i)
	,
	\label{eqn:X=F(X) lr}
\end{equation}
where potentially $z_{i+1} \neq z_i$.

Substituting the low-rank decomposition~\eqref{eqn:Xi_lr_decomp} into the formulation of vector \ac{RRE}~\eqref{eqn:rre_opt} gives
\begin{equation}
	\gamma = \argminst{%
		g\in\mathbb R^n
	}{%
		\norm{ \sum_{i=1}^{n} g_i \left( Z_{i+1} D_{i+1} Z_{i+1}^\TT - Z_i D_i Z_i^\TT \right) }
	}{%
		\sum_{i=1}^{n} g_i = 1,
	}
	\label{eqn:matrre_opt}
\end{equation}
where $\norm{ \cdot }$ denotes the corresponding matrix norm.

Solving this optimization problem, again, gives us a set of weights $\gamma$ for which a low-rank decomposition of the extrapolant can be constructed:
\begin{equation}
\begin{aligned}
	\widehat{X} & := \sum_{i=1}^{n} \gamma_i Z_i D_i Z_i^\TT \\
	& = \begin{bmatrix} Z_1 & \dots & Z_n \end{bmatrix}
	\begin{bmatrix} \gamma_1 D_1 \\ & \ddots \\ & & \gamma_n D_n \end{bmatrix}
	\begin{bmatrix} Z_1 & \dots & Z_n \end{bmatrix}^\TT
	=: \widehat{Z} \widehat{D} \widehat{Z}^\TT.
\end{aligned}
\label{eqn:matrre_extrapolant}
\end{equation}
The rank of the extrapolant is at most $\sum_{i=1}^{n} z_i$.
Depending on the problem, the rank can be significantly lower than this because the columns of $\widehat{Z}$ may be linearly dependent.
For example, in \ac{ADI} and RADI, we have that $\range(Z_i) \subseteq \range(Z_{i+1})$ (see~\cite[eq.~(12)]{benner2018radi}).
Hence, for these methods the rank of the extrapolant is at most $z_n$.

From a computational point of view, solving the optimization problem~\eqref{eqn:matrre_opt} is usually not feasible because the problem size $d$ is so large that we are unable to assemble $X_1, \ldots, X_{n+1} \in\mathbb R^{d \times d}$.
However, if we restrict $\norm{\cdot}$ to be the Frobenius norm or spectral norm, then we can exploit the following unitary invariance of \emph{rectangular matrices} in developing a computationally scalable solution strategy.
We should note that the result has been used without a proof in~\cite[equation~(4.7)]{penzl1999cyclic}.

\begin{lem}\label{lem:norm}
	For a given \chadded{symmetric} matrix $\Delta \in\mathbb{R}^{d \times d}$,
	let $Q \in \mathbb{R}^{d \times q}$ with $q \leq d$ be a matrix such that $Q^\TT Q = I_q$ and $\range(\Delta) \subseteq \range(Q)$.
	\chadded{Then}
\begin{equation}
	\norm{ \Delta } = \norm{ Q^\TT \Delta Q },
	\label{eqn:thm_norm_equivalence}
\end{equation}
\chadded{where $\norm{ \cdot }$ represents either the Frobenius norm or spectral norm.}
\end{lem}
\begin{proof}
The equality~\eqref{eqn:thm_norm_equivalence} is similar to the classical result that the Frobenius and spectral norms are invariant under orthogonal transformations~\cite[Section~IV.2]{bhatia2013matrix}, \cite[Section~7.4.7]{horn2012matrix}, \cite[Section~6.2]{higham2002accuracy}, where $Q$ is orthogonal (and hence square). 

\chadded{%
Here, the subtlety is that $Q$ is allowed to be non-square in case of rank deficiency of $\Delta$.
Since $\range(\Delta) \subseteq \range(Q)$,
there exists some $M\in\mathbb R^{q\times d}$ with $\Delta = QM$,
such that $QQ^\TT \Delta = \Delta$, and by the symmetry of $\Delta$, $\Delta QQ^\TT = \Delta$.
Now choose $Q_\perp \in \mathbb{R}^{d\times(d-q)}$ such that its columns form an orthonormal basis for the orthogonal complement of $\range(Q)$, that is,
$\widetilde Q := \begin{bmatrix}
		Q & Q_\perp
	\end{bmatrix} \in\mathbb R^{d\times d}$
is an orthogonal matrix.
Then, since $\Delta Q_\perp = M^\TT Q^\TT Q_\perp=0$ and similarly
$Q_\perp^\TT \Delta = 0$,}
\begin{equation*}
	\norm{\Delta}
	=
	\left\| \widetilde Q^\TT \Delta \widetilde Q \right\|
	=
	\norm{
		\begin{bmatrix}
			Q^\TT \Delta Q & 0 \\
			0 & 0
	\end{bmatrix}}
	=
	\norm{Q^\TT \Delta Q},
\end{equation*}
\chadded{where we have also used the orthogonal invariance of both the Frobenius norm and the spectral norm.}
\end{proof}

We now continue to derive a computationally scalable formulation of \ac{RRE} for low-rank matrix sequences~\eqref{eqn:Xi_lr_decomp}.
Define $z := \sum_{i=1}^{n+1} z_i$ and let
$Q  \in\mathbb{R}^{d\times z}$ and
$\tilde Z_i\in\mathbb{R}^{z\times z_i}$ be given by the thin QR-decomposition
\begin{equation}
	\begin{bmatrix}
		Z_1 & \dots & Z_{n+1}
	\end{bmatrix}
	= Q \begin{bmatrix}
		\tilde Z_1 & \dots & \tilde Z_{n+1}
	\end{bmatrix}
	\in\mathbb{R}^{d\times z}
	.
\end{equation}
Observe that
\begin{align}
	\Delta
	:={}& \sum_{i=1}^{n} g_i \left(
		Z_{i+1} D_{i+1} Z_{i+1}^\TT - Z_i D_i Z_i^\TT
	\right) \\
\intertext{can be written as}
	={}& \begin{bmatrix}
		Z_1 & \dots & Z_{n+1}
	\end{bmatrix}
	\begin{bmatrix}
		-g_1 D_1 \\
		& (g_1 - g_2) D_2 \\
		&& \ddots \\
		&&& (g_{n-1} - g_n) D_n \\
		&&&& g_n D_{n+1}
	\end{bmatrix}
	\begin{bmatrix}
		Z_1^\TT \\
		\vdots \\
		Z_{n+1}^\TT
	\end{bmatrix}
\end{align}
and that $\range(\Delta) \subseteq \range(Q)$.
Following Lemma~\ref{lem:norm},
$\norm{\Delta} = \norm{Q^\TT \Delta Q}$ holds,
such that the low-rank \ac{RRE}~\eqref{eqn:matrre_opt} is equivalent to
\begin{equation}
	\gamma = \argminst{%
		g\in\mathbb R^n
	}{%
		\norm{ \sum_{i=1}^{n} g_i \left( \tilde Z_{i+1} D_{i+1} \tilde Z_{i+1}^\TT - \tilde Z_i D_i \tilde Z_i^\TT \right) }
	}{%
		\sum_{i=1}^{n} g_i = 1
		,
	}
	\label{eqn:matrre_opt_efficient}
\end{equation}
where all the matrix products involved are much smaller than in formulation~\eqref{eqn:matrre_opt},
$\mathbb{R}^{z\times z}$ as opposed to $\mathbb{R}^{d\times d}$,
assuming $n \ll d$ such that also $z := \sum_{i=1}^{n+1} z_i \ll d$.
At this point,
it becomes viable to apply $\vec(\cdot)$ to the matrices $\tilde Z_i D_i \tilde Z_i^\TT$, $i = 1, \ldots, n+1$
and use Algorithm~\ref{alg:rre}.
(Note that symmetry of these matrices can be exploited in the implementation of $\vec(\cdot)$.)

A stronger condition than $\range(Z_i)\subseteq\range(Z_{i+1})$ may enable us to further reduce the computational complexity.
For example, the iterates produced by \ac{ADI} and RADI reveal increments
\unskip\footnote{%
	The curious reader may skip ahead to Algorithm~\ref{alg:radi} on page~\pageref{alg:radi}.
}
$V_{i+1}\in\mathbb{R}^{d\times v_{i+1}}$ and
$\tilde{D}_{i+1}\in\mathbb{R}^{v_{i+1}\times v_{i+1}}$ such that
$v_{i+1} = z_{i+1}-z_i \in\NN$ and
\begin{equation}
	Z_{i+1} = \begin{bmatrix}
		Z_i & V_{i+1}
	\end{bmatrix}
	\qquad\text{and}\qquad
	D_{i+1} = \begin{bmatrix}
		D_i \\
		& \tilde D_{i+1}
	\end{bmatrix}
	.
	\label{eqn:Xi_lr_decomp_inc}
\end{equation}
The optimization problem~\eqref{eqn:matrre_opt} then reads
\begin{equation}
	\gamma = \argminst{%
		g\in\mathbb R^n
	}{%
		\norm{ \sum_{i=1}^{n} g_i V_{i+1} \tilde D_{i+1} V_{i+1}^\TT }
	}{%
		\sum_{i=1}^{n} g_i = 1
		.
	}
	\label{eqn:matrre_opt_more_efficient:interim}
\end{equation}
Observe that
\begin{equation}
	\Delta
	= \sum_{i=1}^{n} g_i V_{i+1} \tilde D_{i+1} V_{i+1}^\TT
	= \begin{bmatrix}
		V_2 & \dots & V_{n+1}
	\end{bmatrix}
	\begin{bmatrix}
		g_1 \tilde D_2 \\
		& \ddots \\
		&& g_n \tilde D_{n+1}
	\end{bmatrix}
	\begin{bmatrix}
		V_2^\TT \\
		\vdots \\
		V_{n+1}^\TT
	\end{bmatrix}
	,
\end{equation}
whose rank is limited by $q := z_{n+1} - z_1 < z_{n+1} < z$.
Let $\tilde Q\in\mathbb{R}^{d\times q}$ and $\tilde V_i\in\mathbb{R}^{q\times v_i}$ be given by the thin QR-decomposition
\begin{equation}
	\begin{bmatrix}
		V_2 & \dots & V_{n+1}
	\end{bmatrix} =
	\tilde Q \begin{bmatrix}
		\tilde V_2 & \dots & \tilde V_{n+1}
	\end{bmatrix}
	\in\mathbb{R}^{d\times q}
	.
\end{equation}
Obviously, $\range(\Delta)\subseteq\range(\tilde Q)$,
and Lemma~\ref{lem:norm} implies that
\begin{equation}
	\gamma = \argminst{%
		g\in\mathbb R^n
	}{%
		\norm{ \sum_{i=1}^{n} g_i \tilde V_{i+1} \tilde D_{i+1} \tilde V_{i+1}^\TT }
	}{%
		\sum_{i=1}^{n} g_i = 1
		,
	}
	\label{eqn:matrre_opt_more_efficient}
\end{equation}
is equivalent to formulation~\eqref{eqn:matrre_opt_efficient}.
Here, all matrix products can be evaluated in $\mathbb{R}^{q\times q}$, and $q<z$.

\begin{figure}[t]
	\centering
	\resizebox{0.9\linewidth}{!}{

\providecommand\ifresiduals\iffalse

\setlength\myH{5em}%
\setlength\myW{2ex}%
\setlength\myS{1ex}%

\newcommand\nodeVDV[3]{%
	\node (#1) {#2};
	\draw[fill, color=#3!50]
		(#1.center)
		++(-1.5\myW-\myS-0.5\myH,-0.5\myH) 
		rectangle ++(\myW,\myH)
		++(\myS,-\myW)
		rectangle ++(\myW,\myW)
		++(\myS,-\myW)
		rectangle ++(\myH,\myW)
		;
	\coordinate (#1 north) at (current bounding box.north);
	\coordinate (#1 south) at (current bounding box.south);
}%

\newcommand\nodeR[3][1]{%
	\ifresiduals
	\node (#2) {#3};
	\draw[fill, gray!50]
		(#2.west) ++(-\myS,-0.5\myH)
		foreach \distance in {1,...,#1} {
			rectangle ++(-\myW,\myH)
			++(-\myS,-\myH)
		};
	\else
	\coordinate[yshift=.5ex] (#2);
	\fi
}%

\begin{tikzpicture}[
	thick,
	X/.style = {
		fill = gray!20,
		minimum size = \myH,
	},
	heading/.style = {
		above = \myS,
		inner sep = 0pt,
	},
	every node/.style = {
		anchor = base,
	},
	every outer matrix/.style = {
		inner sep = 0pt,
	},
]
	\matrix (foo) [
		row sep = 2\myS,
		column sep = \myS,
	] {
		\node[X] (X1) {$X_1$}; &
		\node {=}; &
		\nodeVDV{V11}{$\hphantom{\tau_1} Z_1 D_1 Z_1^\TT$}{mycolor1} &&&&& [2\myS]
		\nodeR{R1}{$R_1$} & [4\myS]
		\coordinate (R1+); \\

		\node[X] (X2) {$X_2$}; &
		\node {=}; &
		\nodeVDV{V21}{$\hphantom{\tau_1} Z_1 D_1 Z_1^\TT$}{mycolor1} &
		\node {+}; &
		\nodeVDV{V22}{$\hphantom{\tau_2} V_2 \tilde D_2 V_2^\TT$}{mycolor2} &&&
		\nodeR{R2}{$R_2$} &
		\coordinate (R2+); & [4\myS]
		\node[draw, fill=gray!20] (RRE) {RRE}; \\

		\node[X] (X3) {$X_3$}; &
		\node {=}; &
		\nodeVDV{V31}{$\hphantom{\tau_1} Z_1 D_1 Z_1^\TT$}{mycolor1} &
		\node {+}; &
		\nodeVDV{V32}{$\hphantom{\tau_2} V_2 \tilde D_2 V_2^\TT$}{mycolor2} &
		\node {+}; &
		\nodeVDV{V33}{$\hphantom{\tau_3} V_3 \tilde D_3 V_3^\TT$}{mycolor3} &
		\nodeR{R3}{$R_3$} &
		\coordinate (R3+); \\

		\node[X, label={[heading] Extrapolant}] {$\widehat X_{\ifresiduals 3\else 2\fi}$}; &
		\node{=}; &
		\nodeVDV{V1}{$\tau_1 Z_1 D_1 Z_1^\TT$}{mycolor1} &
		\node {+}; &
		\nodeVDV{V2}{$\tau_2 V_2 \tilde D_2 V_2^\TT$}{mycolor2} &
		\ifresiduals
		\node {+}; &
		\nodeVDV{V3}{$\tau_3 V_3 \tilde D_3 V_3^\TT$}{mycolor3} &
		\else
		&&
		\fi
		\coordinate[yshift=.5ex] (RE); \\
	};

	\node[heading] at (V11 north -| X1) {Iterate};
	\node[heading] at (V11 north -| V2) {Low-rank factors};
	\ifresiduals
	\node[heading] at (V11 north -| R1) {Residual factors};
	\fi

	\foreach \row in {1, 2, 3} {
		\draw[
			dashed,
			transform canvas = { yshift = -\myS },
			shorten > = -\myS,
		] (X\row.west |- V\row1 south) ++(-\myS,0) -- (V\row1 south -| R\row.east);
	}

	\coordinate (junction) at (R1 -| R1+); 
	\draw[->] (R1) -| (RRE);
	\draw (R2) -- (R2 -| junction);
	\draw (R3) -| (junction);
	\draw[->] (RRE) |- (RE);
	\node[anchor=south east] at (RE -| RRE) {$\tau_1, \tau_2\ifresiduals, \tau_3\fi$};
\end{tikzpicture}
	\caption{%
		Schema illustrating the application of \ac{RRE} to the first 3 low-rank
		iterates $X_1, X_2, X_3$,
		where the weights $\tau_1, \tau_2$ are as in~\eqref{eqn:matrre_tau}.
	}%
	\label{fig:lowrank_rre}
\end{figure}
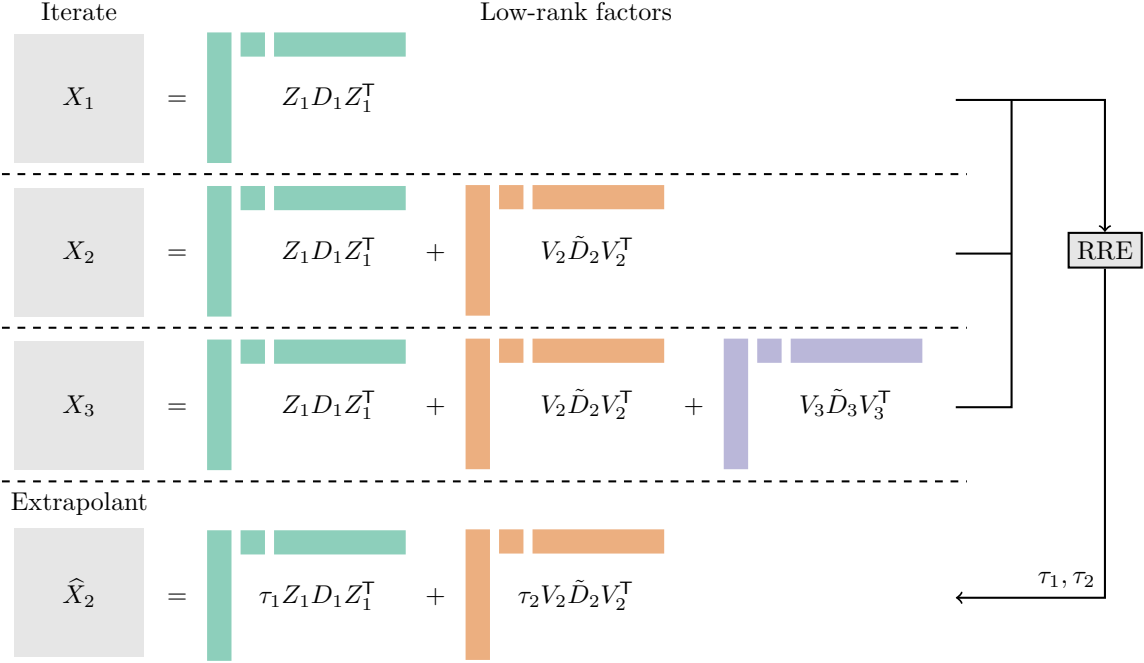

Figure~\ref{fig:lowrank_rre} gives a graphical overview on the procedure.
In this figure, we also highlight the fact that the iterates are constructed by low-rank updates~\eqref{eqn:Xi_lr_decomp_inc}.
Thus, the extrapolant $\widehat{X}$ can be written as a weighted sum of these low-rank updates:
\begin{equation}
	\widehat{X}
	= \sum_{i=1}^{n} \gamma_i X_i
	= \tau_1 Z_1 D_1 Z_1^\TT + \sum_{i=2}^{n} \tau_i V_i \tilde D_i V_i^\TT,
	\label{eqn:matrre_tau}
\end{equation}
with $\tau_i = \sum_{j=i}^{n} \gamma_j$, $i = 1, \ldots, n$.

\begin{remark}
	The extrapolant~$\widehat X$ is written without a subscript
	to emphasize that the extrapolation procedure is oblivious to offsets in the comprising sequence of iterates;
	compare Algorithm~\ref{alg:rre}.
	In Figure~\ref{fig:lowrank_rre}, however,
	foreshadowing Figure~\ref{fig:matrre_radi},
	we assign the extrapolant to~$\widehat X_2$ with a subscript
	to hint at the fact that the extrapolant~$\widehat X_2$ is essentially based on~$X_1$ and~$X_2$
	while~$X_3$ is a mere algorithmic necessity;
	compare Algorithms~\ref{alg:noncycling_rre} and~\ref{alg:cycling_rre}.
	In Section~\ref{sct:application},
	we will demonstrate how an extrapolant~$\widehat X_3$ can instead be obtained without computing~$X_4$.
\end{remark}

\begin{algorithm}
	\caption{%
		\ac{RRE} for low-rank matrix sequences~\eqref{eqn:matrre_opt_more_efficient}.
	}%
	\label{alg:lowrank_rre}
	\KwIn{%
		Iterates $X_1, \ldots, X_{n+1}$ given by
		$Z_1\in\mathbb{R}^{d\times z_1}$
		and
		$D_1\in\mathbb{R}^{z_1\times z_1}$
		as well as
		$V_i\in\mathbb{R}^{d\times v_i}$
		and
		$\tilde D_i\in\mathbb{R}^{v_i\times v_i}$
		fulfilling~\eqref{eqn:Xi_lr_decomp} and~\eqref{eqn:Xi_lr_decomp_inc}
		for $i=2,\ldots,n+1$.
	}
	\KwOut{%
		Extrapolant $\widehat{X} = \widehat{Z} \widehat{D} \widehat{Z}^\TT$.
	}
	\medskip
	\Fn{$\RRE_{\Delta}^{\LR}(X_1, \ldots, X_n, X_{n+1})$}{
		Compute triangular factor of thin QR-decomposition:
		\begin{equation*}
			\begin{bmatrix}
				V_{2} & \dots & V_{n+1}
			\end{bmatrix}
			= \tilde Q \begin{bmatrix}
				\tilde V_{2} & \dots & \tilde V_{n+1}
			\end{bmatrix}
		\end{equation*}
		\vspace{-\baselineskip}\;%
		Initialize matrix of differences:
		\begin{equation*}
			U \gets\begin{bmatrix}
				\vec(\tilde V_2 \tilde D_2 \tilde V_2^\TT) & \dots &
				\vec(\tilde V_{n+1} \tilde D_{n+1} \tilde V_{n+1}^\TT)
			\end{bmatrix} \in\mathbb R^{q^2 \times n}
		\end{equation*}
		\vspace{-\baselineskip}\;%
		Compute weights $\gamma\gets\RRE(U)$\;
		Assemble low-rank factors of extrapolant:
		\begin{equation*}
			\widehat{Z} \gets Z_n,
			\quad
			\widehat{D} \gets \begin{bmatrix}
				\tau_1 D_1 \\
				& \tau_2 \tilde D_2 \\
				&& \ddots \\
				&&& \tau_n \tilde D_n
			\end{bmatrix}
			,\quad\text{where }
			\tau_i = \sum_{j=i}^n \gamma_j
		\end{equation*}
		\vspace{-\baselineskip}\;%
		\label{alg:lowrank_rre:extrapolant}
	}
\end{algorithm}

Algorithm~\ref{alg:lowrank_rre} summarizes the resulting procedure as an extension to Algorithm~\ref{alg:rre}.
Assuming the iterative process~\eqref{eqn:X=F(X) lr} reveals an incremental structure~\eqref{eqn:Xi_lr_decomp_inc},
Algorithm~\ref{alg:lowrank_rre} can be applied in any of the two \ac{RRE} modes shown in Algorithms~\ref{alg:noncycling_rre} and~\ref{alg:cycling_rre}.
The final ingredient is exploiting the incremental structure~\eqref{eqn:Xi_lr_decomp_inc}
to more efficiently compute the extrapolant~\eqref{eqn:matrre_extrapolant}
as shown in line~\ref{alg:lowrank_rre:extrapolant}.
Note that the first diagonal block of $\widehat{D}$
is given by~$D_1$,
the complete inner low-rank factor of~$X_1$.
Optionally,
one may remove the low-rank blocks~$V_i$ and~$\tilde D_i$ for which
$\tau_i := \sum_{j=i}^n \gamma_j$ is sufficiently close to zero:
\begin{equation}
	\abs{\tau_i}
	\leq \sqrt{\machineprecision} \max_{1\leq j\leq n} \abs{\tau_j}
	.
\end{equation}

\subsection{RRE for nonstationary fixed-point processes}\label{sct:flexible_rre}

Up to this point, we have considered \ac{RRE} in the context of \emph{stationary} fixed-point processes of the form~\eqref{eqn:xi_fp}.
By stationary, we mean that the fixed-point function $f$ stays the same for all iterations.
In this section, we consider \emph{nonstationary} fixed-point processes of the form~\eqref{eqn:xi_fp_iteration_dependent}.
To apply \ac{RRE} to nonstationary fixed-point processes, one could still use the weights obtained from the standard formulation~\eqref{eqn:rre_opt}.
However, as we will see later in this section, the strategy can lead to slow convergence.
Hence, we propose an alternative formulation and show its improved convergence speed.

We now consider a simple example of vector-valued fixed-point process to demonstrate the difference between the standard formulation and the alternative formulation.
As we have discussed above, an important difference between stationary and nonstationary fixed-point processes is that the two notions of the residual, equations~\eqref{eqn:residual:equation} and~\eqref{eqn:residual:iteration}, do no longer coincide.
Worse yet, the residual~\eqref{eqn:residual:iteration} of the iterative process becomes nonstationary as well.
That is,
\begin{equation}
	\reseq(x_i)
	:= b - Ax_i
	\neq x_{i+1} - x_i
	= M_i^{-1} (b - Ax_i)
	=: \resit_i(x_i)
	.
\end{equation}
Therefore, we must work with the residual of the underlying equation, \(\reseq\),
as shown in Algorithm~\ref{alg:nonstationary_rre} ($\RRE_{\mathscr F}$) as an extension of Algorithm~\ref{alg:rre}.

To illustrate the advantage of using \(\reseq\),
we present a numerical example involving the iterative solution of the linear system $Ax = b$ with the well-known \ac{SOR} scheme~\cite[Section~11.2.7]{golub2013matrix},
see formula~\eqref{eqn:xi+1=Axi+b} with
\begin{equation}
	M = M_i = \frac{1}{\omega_i} D_A + L_A
	\qquad\text{and}\qquad
	N = N_i = \left( \frac{1}{\omega_i} - 1 \right) D_A - U_A,
	\label{eqn:fp_example}
\end{equation}
where $L_A$, $D_A$, and $U_A$ denote the strictly lower triangular, diagonal,
and strictly upper triangular parts of $A = L_A + D_A + U_A$, respectively,
and $0 < \omega_i < 2$.
For the underlying equation \(Ax=b\),
let $A \in \mathbb{R}^{20 \times 20}$ be a tri-diagonal matrix
with $1/10$ on the main diagonal and $1/20$ on the sub-diagonals,
and set $b = A\ones / \norm{A\ones}$.
We initialize the iterative scheme with $x_1 = 0$ and compare three runs:
\begin{enumerate}
	\item without extrapolation;\hfill($x_{i+1}=f(x_i)$)
	\item with cycling \ac{RRE} using the standard formulation~\eqref{eqn:rre_opt};\hfill($\RRE_\Delta$)
	\item\label{run:3} with cycling \ac{RRE} using formulation~\eqref{eqn:residual_extrapolant} but with~$\reseq(\cdot)$ in place of~$\resit(\cdot)$.\hfill($\RRE_{\mathscr F}$)
\end{enumerate}

\begin{figure}
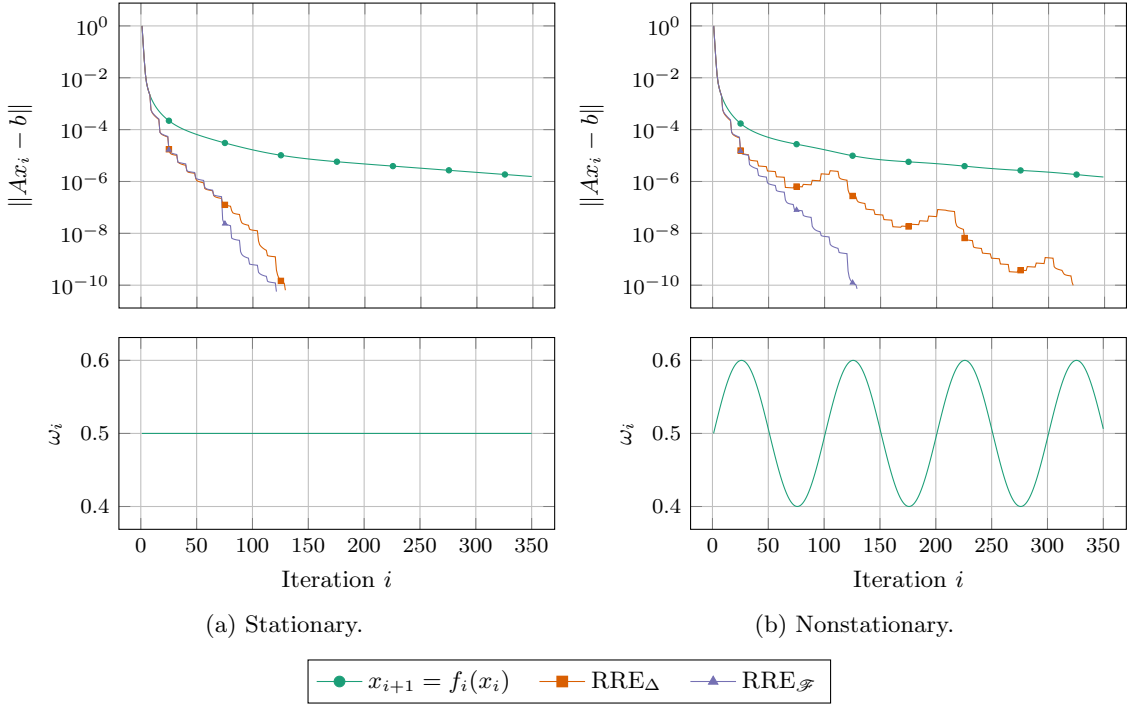

	\centering
	\begin{subfigure}[b]{0.5\textwidth}
		\simpleplot[
			legend style = {font = \small},
			legend columns = -1,
			legend entries = {
				$x_{i+1} = f_i(x_i)$,
				$\RRE_\Delta$,
				$\RRE_{\mathscr F}$,
			},
			legend to name = fig:example_stationary_fp:legend,
		]{simple.csv}
		\caption{Stationary.}%
        \label{fig:example_stationary_fp}
	\end{subfigure}%
	\begin{subfigure}[b]{0.5\textwidth}
		\centering
		\simpleplot{simple_nonstationary.csv}
		\caption{Nonstationary.}%
        \label{fig:example_nonstationary_fp}
	\end{subfigure}%
	\\
	\medskip
	\ref*{fig:example_stationary_fp:legend}
	\caption{Comparison of \ac{RRE} using the standard formulation and the proposed
      formulation on a stationary and nonstationary fixed-point process.}%
    \label{fig:example_stationary_vs_nonstationary_fp}
\end{figure}

The results for \ac{RRE} setting $n = 8$ are shown in Figure~\ref{fig:example_stationary_vs_nonstationary_fp} for
the stationary case ($\omega_i = 0.5$) and
the nonstationary case ($\omega_i = 0.5 + 1/10 \sin(0.02 \pi i)$).
In both scenarios,
using the \ac{RRE} that minimizes the residual of the underlying equation
accelerates convergence (fewer than 130 iterations). 
The standard formulation of \ac{RRE}, on the other hand,
encountered issues when the iterates are generated from an nonstationary fixed-point process.
Although the standard \ac{RRE} still converges in both cases and is even slightly ahead of our new formulation for the stationary case until iteration $i=72$,
it is significantly slower for the nonstationary process (329 iterations) than for the stationary process (129 iterations).

\begin{algorithm}
	\caption{%
		\ac{RRE} for non-stationary processes~\eqref{eqn:xi_fp_iteration_dependent}.
	}%
	\label{alg:nonstationary_rre}
	\KwIn{%
		Iterates $x_1, \ldots, x_n$.
	}
	\KwOut{%
		Extrapolant~$\widehat x$.
	}
	\medskip
	\Fn{$\RRE_{\mathscr F}(x_1, \ldots, x_n)$}{%
		Initialize matrix of residuals
		\begin{math}
			U \gets \begin{bmatrix}
				\mathscr F(x_1) & \dots & \mathscr F(x_n)
			\end{bmatrix}
			\in\mathbb R^{d\times n}
		\end{math}\;
		$\gamma\gets\RRE(U)$\;
		$\widehat x \gets \sum_{i=1}^n \gamma_i x_i$\;
	}
\end{algorithm}

Observe that $\RRE_{\mathscr F}$ does not require knowledge about iterate~$x_{n+1}$ to extrapolate from iterates~$x_1,\ldots,x_n$.
Therefore, Algorithms~\ref{alg:noncycling_rre} and~\ref{alg:cycling_rre} may be reformulated to compute an extrapolation with one less application of~$f$.
We leave this reformulation as an exercise for the reader.

\subsection{Extension to large-scale matrix equations}\label{sct:application}
The generalization of \ac{RRE} to low-rank matrices in Section~\ref{sct:matrix_rre} and to nonstationary fixed-point iterations in Section~\ref{sct:flexible_rre} may be used independently.
However, in the important case of iteratively solving large-scale matrix equations we shall require both extensions simultaneously.
Hence, in this section we will provide more details of the method in this usage scenario.

We focus on the following \ac{ARE} with solution $X \in \mathbb{R}^{d \times d}$:
\begin{equation}
	\mathscr{R}(X) := A^\TT X E + E^\TT X A + C^\TT C - E^\TT X B H^{-1} B^\TT X E = 0
	\in\mathbb{R}^{d\times d}
	,
	\label{eqn:riccati}
\end{equation}
where $B$ and $C^\TT$ are thin rectangular matrices and $H$ is symmetric positive definite.
The \ac{ARE}~\eqref{eqn:riccati} reduces to a linear Lyapunov equation whenever $B = 0$.
\acp{ARE}, and its adjoint variants which we do not consider here, are important nonlinear matrix equations that appear in many problems in control theory~\cite{Jungers2017historical}.
In such problems, typically one is only interested in the unique stabilizing solution $X$ such that the spectrum of $(A - B H^{-1} B^\TT X E, E)$ is in the open left half plane.
The stabilizing solution is guaranteed to exist if $(A, E, B)$ is stabilizable and $(A, E, C^\TT)$ is detectable~\cite{lancaster1995algebraic,BinIM12}.
\chdeleted{as cited by~\cite{benner2020numerical}.}

In the literature, there exist many iterative solvers that approximate the stabilizing solution of an \ac{ARE} with sparse coefficients.
A comprehensive overview and comparison of different solvers is given in~\cite{benner2020numerical}.
In that comparison study, one of the most performant solvers is the RADI method, first described in~\cite{benner2018radi}.
In this section,
we only give a very brief introduction to the RADI method since our focus is on how to accelerate RADI using the generalizations of \ac{RRE}
derived in the previous sections;
see~\cite{benner2018radi} for more details on the full version of RADI\@.

\begin{algorithm}[t]
	\caption{%
		RADI iteration map
		\cite[formula~(12)]{benner2018radi} 
		\cite[\texttt{mess\_lrradi} simplified]{SaaKB-mmess-all-versions}. 
	}%
	\label{alg:radi}
	\KwIn{%
		Iterate $X_i = Z_i D_i Z_i^\TT$ given by
		$Z_i\in\mathbb{R}^{d\times z_i}$ and
		$D_i\in\mathbb{R}^{z_i\times z_i}$.
		\linebreak
		Residual $\mathscr{R}(X_i) = R_i T R_i^\TT$
		given by $R_i\in\mathbb{R}^{d\times r}$ and $T\in\mathbb{R}^{r\times r}$.
	}
	\KwOut{%
		Iterate $X_{i+1} = Z_{i+1} D_{i+1} Z_{i+1}^\TT$ and
		residual $\mathscr{R}(X_{i+1}) = R_{i+1} T R_{i+1}^\TT$.
	}
	\medskip
	\Fn{$F_i(Z_i, D_i, R_i, T)$}{%
		Select shift parameter $\sigma_i\in\RRneg$\;
		Compute outer low-rank factor of increment:
		\begin{equation*}
			V_{i+1} \gets
			\sqrt{-2\sigma_i} \cdot
			(A^\TT - E^\TT Z_i D_i Z_i^\TT B H^{-1} B^\TT + \sigma_i E^\TT)^{-1} R_i T
		\end{equation*}
		\vspace{-\baselineskip}\;%
		Compute inner low-rank factor of increment:
		\begin{equation*}
			\tilde Y_{i+1} \gets
			T - \frac{1}{2\sigma_i}
			\big( V_{i+1}^\TT B \big)
			H^{-1}
			\big( V_{i+1}^\TT B \big)^\TT
			\qquad\text{and}\qquad
			\tilde D_{i+1} \gets (\tilde Y_{i+1})^{-1}
		\end{equation*}
		\vspace{-\baselineskip}\;%
		Assemble low-rank factors of $X_{i+1}$:
		\begin{equation*}
			Z_{i+1} \gets \begin{bmatrix}
				Z_i & V_{i+1}
			\end{bmatrix}
			\qquad\text{and}\qquad
			D_{i+1} \gets \begin{bmatrix}
				D_i \\
				& \tilde D_{i+1}
			\end{bmatrix}
		\end{equation*}
		\vspace{-\baselineskip}\;%
		Update outer residual factor:
		\begin{equation*}
			R_{i+1} \gets R_i + \sqrt{-2\sigma_i}\cdot E^\TT V_{i+1}\tilde D_{i+1}
		\end{equation*}
		\vspace{-\baselineskip}\;%
	}
\end{algorithm}

The iterations of RADI can be described as follows.
For a given initialization $X_1 = Z_1 D_1 Z_1^\TT$,
write the initial residual as
\begin{math}
	\mathscr R(X_1) = R_1 T R_1^\TT,
\end{math}
where
\begin{equation}
\begin{alignedat}{2}
	R_1 &:= \begin{bmatrix}
		C^\TT & A^\TT Z_1 & E^\TT Z_1
	\end{bmatrix}
	&&\in\mathbb{R}^{d\times r}
	,
	\\
	T &:= \begin{bmatrix}
		I & \textbf{0} & \textbf{0} \\
		\textbf{0} & \textbf{0} & D_1 \\
		\textbf{0} & D_1 & - D_1Z_1^\TT B H^{-1} B^\TT Z_1 D_1
	\end{bmatrix}
	&&\in\mathbb{R}^{r\times r}
	,
\end{alignedat}
\label{eqn:riccati:residual}
\end{equation}
see, for example, \cite{BenLP08} for a comparison.
Subsequent iterates $X_2, X_3, \ldots$ (and \chadded{the corresponding} residuals) may then be computed using
the nonstationary RADI iteration map \chadded{shown in Algorithm}~\ref{alg:radi};
\chadded{cf.~\eqref{eqn:X=F(X) lr}, but with nonstationary~$F_i$.}
Note that complex shift parameters (with negative real part) can also be used in RADI, in which case complex arithmetic has to be used, e.g., in computing the low-rank factors~$V_{i+1}$ and~$\tilde Y_{i+1}$ of the increments.
\cite[Proposition~3]{benner2018radi} describes how to handle a pair of complex-conjugated shifts at once, 
reducing the use of complex arithmetic,
which is implemented in~\cite[\texttt{mess\_lrradi}]{SaaKB-mmess-all-versions},
but, for simplicity, not reflected in Algorithm~\ref{alg:radi}.
However, Algorithm~\ref{alg:radi}
extends~\cite[formula~(12)]{benner2018radi} to nonzero initializations, $X_1\neq 0$.
Note that the residuals permit $\mathscr{R}(X_i) = R_i T R_i^\TT$,
where the inner factor~$T$ is fixed for all iterations $i\in\NN$.

An attractive feature of RADI is that the rank of the residual is bounded from above by $r$.
The initialization $X_1$ fully determines the maximum rank $r$.
If the spectrum of $(A, E)$ is in the open left half plane, the initialization $X_1 = 0$ is typically used.
In this case,
the residual factors~\eqref{eqn:riccati:residual} collapse to $R_1=C^\TT$ and $T=I$ and,
hence, the rank of the residual is equal to $\rank(C^\TT C)=\rank(C)$.

Note that another useful property of RADI (as we mentioned in Section~\ref{sct:matrix_rre}) is that each iterate~$X_i$ can be written as a rank-$r$ update of~$X_{i-1}$.
In terms of formula~\eqref{eqn:Xi_lr_decomp_inc}, it is $v_{i+1} := r$ for all~$i\in\NN$.
Hence, the rank of the extrapolant~\eqref{eqn:matrre_extrapolant} is upper bounded by the rank of $X_n$.

The convergence speed of RADI can depend greatly on an appropriate selection of the shift parameters.
Many selection methods are proposed in literature, see, e.g.,~\cite{simoncini2016analysis,benner2018radi,benner2020numerical}.
However, regardless of the exact shift selection method that is used, the RADI iteration converges to the unique stabilizing positive semi-definite solution of the \ac{ARE}~\eqref{eqn:riccati}, e.g.,
if the spectrum of $(A, E)$ is in the open left half plane, $X_1 \succeq 0$, $\mathscr{R}(X_1) \succeq 0$ and the shift parameters $\sigma_i$ satisfy the non-Blaschke condition~\cite[Theorem~6.1]{massoudi2016analysis}:
\begin{equation}
	\sum_{i=1}^{\infty} \frac{ \sigma_i }{ 1 + \abs{\sigma_i}^2 } = -\infty.
	\label{eqn:nonblaschke_condition}
\end{equation}
Under this convergence guarantee, Algorithm~\ref{alg:radi}
may be viewed as a matrix version of the nonstationary fixed-point process~\eqref{eqn:xi_fp_iteration_dependent}:
\begin{equation}
	X_{i+1} = F_i(X_i)
	,
	\label{eqn:Xi_fp_nonstationary_matrix}
\end{equation}
with $F_i(X_i) = X_i + V_{i+1} \tilde{D}_{i+1} V_{i+1}^\TT$.
The nonstationarity in process~\eqref{eqn:Xi_fp_nonstationary_matrix} is the result of allowing different shift parameters~$\sigma_k$ in each iteration.
Hence, the results of Section~\ref{sct:flexible_rre} suggest that the residual of the \ac{ARE}~\eqref{eqn:riccati} should be used to find the \ac{RRE} coefficients.
That is, instead of solving the optimization problem~\eqref{eqn:matrre_opt},
the matrix version of \ac{RRE} formulated for stationary fixed-point iterations, we solve
\begin{equation}
	\gamma = \argminst{%
		g\in\mathbb R^n
	}{%
		\norm{ \sum_{i=1}^{n} g_i R_i T R_i^\TT }
	}{%
		\sum_{i=1}^{n} g_i = 1
		.
	}
	\label{eqn:matrre_opt_nonstationary}
\end{equation}
Recall that the Riccati operator~$\mathscr{R}(\cdot)$ is nonlinear,
such that this optimization problem only minimizes an approximation of the extrapolant's residual.

Optimization problem~\eqref{eqn:matrre_opt_nonstationary} involves the norm of a large but low-rank matrix,
similarly to~\eqref{eqn:matrre_opt_more_efficient:interim}.
Hence, analogous to the derivation of~\eqref{eqn:matrre_opt_more_efficient} via Lemma~\ref{lem:norm}, the same strategy applies here.
The resulting procedure is summarized in Algorithm~\ref{alg:nonstationary_lowrank_rre},
which combines Algorithms~\ref{alg:lowrank_rre} and~\ref{alg:nonstationary_rre}.
A visualization of the procedure is provided in Figure~\ref{fig:matrre_radi}.
Recall that, in order to generate an $n$-term extrapolant,
$\RRE_{\Delta}^{\LR}$ (Algorithm~\ref{alg:rre}) requires $n+1$~iterates while
$\RRE_{\mathscr F}^{\LR}$ (Algorithm~\ref{alg:nonstationary_rre}) requires~$n$.
Compare Figures~\ref{fig:lowrank_rre} and~\ref{fig:matrre_radi} for the case $n=2$,
which show that $\RRE_{\mathscr F}^{\LR}$ is able to use more information of the iterates than $\RRE_{\Delta}^{\LR}$ when given the same number of iterates.

Note that Algorithm~\ref{alg:nonstationary_lowrank_rre} is presented in the way to contrast Algorithm~\ref{alg:radi} and emphasize the application of \ac{RRE} therein, yet it could be easily extended to arbitrary functional $\mathscr F$,
namely, by dropping the assumption that each residual $\mathscr R(X_i)$ has the same rank.
Instead, all we need is to allow the residual factors $R_i$ and associated inner factor $T_i$
(which may no longer be the same for each iteration)
to have a different size and partition the QR-decomposition in Line~\ref{algline:nonstationary_lowrank_rre-QR} accordingly.
\chadded{%
Furthermore, Algorithm~\ref{alg:nonstationary_lowrank_rre} can easily be modified to use a different orthogonal-times-square factorization,
such as a pivoted QR decomposition.}

\begin{algorithm}[t]
	\caption{%
		\ac{RRE} for nonstationary low-rank matrix sequences~\eqref{eqn:matrre_opt_nonstationary}.
	}%
	\label{alg:nonstationary_lowrank_rre}
	\KwIn{%
		Iterates $X_1, \ldots, X_n$ given by
		$Z_1\in\mathbb{R}^{d\times z_1}$
		and
		$D_1\in\mathbb{R}^{z_1\times z_1}$
		as well as
		$V_i\in\mathbb{R}^{d\times v_i}$
		and
		$\tilde D_i\in\mathbb{R}^{v_i\times v_i}$
		fulfilling~\eqref{eqn:Xi_lr_decomp} and~\eqref{eqn:Xi_lr_decomp_inc}
		for $i=2,\ldots,n$.
		\linebreak
		Residuals $\mathscr{R}(X_i) = R_i T R_i^\TT$, where
		$R_i\in\mathbb{R}^{d\times r}$ and $T\in\mathbb{R}^{r\times r}$.
	}
	\KwOut{%
		Extrapolant $\widehat{X} = \widehat{Z} \widehat{D} \widehat{Z}^\TT$.
	}
	\medskip
	\Fn{$\RRE_{\mathscr R}^{\LR}(X_1, \ldots, X_n, R_1, \ldots, R_n, T)$}{
		Compute triangular factor of thin QR-decomposition:
		\begin{equation*}
			\begin{bmatrix}
				R_1 & \dots & R_n
			\end{bmatrix}
			= \tilde Q \begin{bmatrix}
				\tilde R_1 & \dots & \tilde R_n
			\end{bmatrix}
		\end{equation*}
		\vspace{-\baselineskip}\;\label{algline:nonstationary_lowrank_rre-QR}%
		Initialize matrix of residuals:
		\begin{equation*}
			U \gets\begin{bmatrix}
				\vec(\tilde R_1 T \tilde R_1^\TT) & \dots &
				\vec(\tilde R_n T \tilde R_n^\TT)
			\end{bmatrix} \in\mathbb R^{(nr)^2 \times n}
		\end{equation*}
		\vspace{-\baselineskip}\;%
		Compute weights $\gamma\gets\RRE(U)$\;
		Assemble low-rank factors of extrapolant:
		\begin{equation*}
			\widehat{Z} \gets Z_n,
			\quad
			\widehat{D} \gets \begin{bmatrix}
				\tau_1 D_1 \\
				& \tau_2 \tilde D_2 \\
				&& \ddots \\
				&&& \tau_n \tilde D_n
			\end{bmatrix}
			,\quad\text{where }
			\tau_i = \sum_{j=i}^n \gamma_j
		\end{equation*}
		\vspace{-\baselineskip}\;%
	}
\end{algorithm}

\begin{figure}[t]
	\centering
	\newcommand\ifresiduals\iftrue%
	\resizebox{0.9\linewidth}{!}{

\providecommand\ifresiduals\iffalse

\setlength\myH{5em}%
\setlength\myW{2ex}%
\setlength\myS{1ex}%

\newcommand\nodeVDV[3]{%
	\node (#1) {#2};
	\draw[fill, color=#3!50]
		(#1.center)
		++(-1.5\myW-\myS-0.5\myH,-0.5\myH) 
		rectangle ++(\myW,\myH)
		++(\myS,-\myW)
		rectangle ++(\myW,\myW)
		++(\myS,-\myW)
		rectangle ++(\myH,\myW)
		;
	\coordinate (#1 north) at (current bounding box.north);
	\coordinate (#1 south) at (current bounding box.south);
}%

\newcommand\nodeR[3][1]{%
	\ifresiduals
	\node (#2) {#3};
	\draw[fill, gray!50]
		(#2.west) ++(-\myS,-0.5\myH)
		foreach \distance in {1,...,#1} {
			rectangle ++(-\myW,\myH)
			++(-\myS,-\myH)
		};
	\else
	\coordinate[yshift=.5ex] (#2);
	\fi
}%

\begin{tikzpicture}[
	thick,
	X/.style = {
		fill = gray!20,
		minimum size = \myH,
	},
	heading/.style = {
		above = \myS,
		inner sep = 0pt,
	},
	every node/.style = {
		anchor = base,
	},
	every outer matrix/.style = {
		inner sep = 0pt,
	},
]
	\matrix (foo) [
		row sep = 2\myS,
		column sep = \myS,
	] {
		\node[X] (X1) {$X_1$}; &
		\node {=}; &
		\nodeVDV{V11}{$\hphantom{\tau_1} Z_1 D_1 Z_1^\TT$}{mycolor1} &&&&& [2\myS]
		\nodeR{R1}{$R_1$} & [4\myS]
		\coordinate (R1+); \\

		\node[X] (X2) {$X_2$}; &
		\node {=}; &
		\nodeVDV{V21}{$\hphantom{\tau_1} Z_1 D_1 Z_1^\TT$}{mycolor1} &
		\node {+}; &
		\nodeVDV{V22}{$\hphantom{\tau_2} V_2 \tilde D_2 V_2^\TT$}{mycolor2} &&&
		\nodeR{R2}{$R_2$} &
		\coordinate (R2+); & [4\myS]
		\node[draw, fill=gray!20] (RRE) {RRE}; \\

		\node[X] (X3) {$X_3$}; &
		\node {=}; &
		\nodeVDV{V31}{$\hphantom{\tau_1} Z_1 D_1 Z_1^\TT$}{mycolor1} &
		\node {+}; &
		\nodeVDV{V32}{$\hphantom{\tau_2} V_2 \tilde D_2 V_2^\TT$}{mycolor2} &
		\node {+}; &
		\nodeVDV{V33}{$\hphantom{\tau_3} V_3 \tilde D_3 V_3^\TT$}{mycolor3} &
		\nodeR{R3}{$R_3$} &
		\coordinate (R3+); \\

		\node[X, label={[heading] Extrapolant}] {$\widehat X_{\ifresiduals 3\else 2\fi}$}; &
		\node{=}; &
		\nodeVDV{V1}{$\tau_1 Z_1 D_1 Z_1^\TT$}{mycolor1} &
		\node {+}; &
		\nodeVDV{V2}{$\tau_2 V_2 \tilde D_2 V_2^\TT$}{mycolor2} &
		\ifresiduals
		\node {+}; &
		\nodeVDV{V3}{$\tau_3 V_3 \tilde D_3 V_3^\TT$}{mycolor3} &
		\else
		&&
		\fi
		\coordinate[yshift=.5ex] (RE); \\
	};

	\node[heading] at (V11 north -| X1) {Iterate};
	\node[heading] at (V11 north -| V2) {Low-rank factors};
	\ifresiduals
	\node[heading] at (V11 north -| R1) {Residual factors};
	\fi

	\foreach \row in {1, 2, 3} {
		\draw[
			dashed,
			transform canvas = { yshift = -\myS },
			shorten > = -\myS,
		] (X\row.west |- V\row1 south) ++(-\myS,0) -- (V\row1 south -| R\row.east);
	}

	\coordinate (junction) at (R1 -| R1+); 
	\draw[->] (R1) -| (RRE);
	\draw (R2) -- (R2 -| junction);
	\draw (R3) -| (junction);
	\draw[->] (RRE) |- (RE);
	\node[anchor=south east] at (RE -| RRE) {$\tau_1, \tau_2\ifresiduals, \tau_3\fi$};
\end{tikzpicture}
	\caption{%
		Schema illustrating the application of \ac{RRE} to the first 3 low-rank
		iterates $X_1, X_2, X_3$ and corresponding residual factors $R_1, R_2, R_3$ (with $T = I$),
		where the weights $\tau_1, \tau_2, \tau_3$ are as in~\eqref{eqn:matrre_tau}.
	}%
	\label{fig:matrre_radi}
\end{figure}
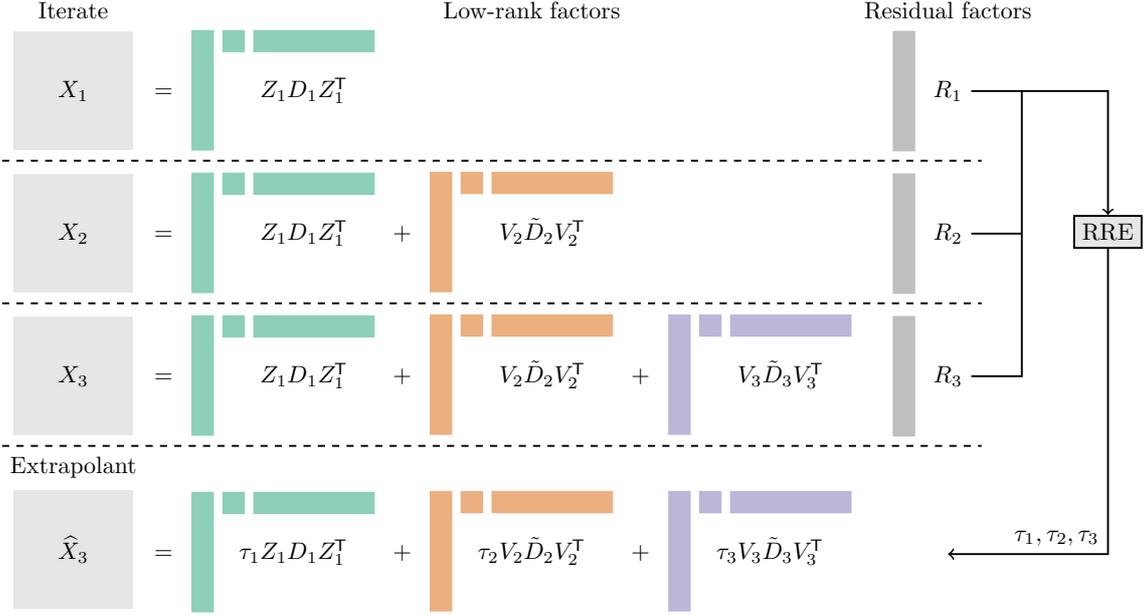

\newcommand\Landau{\mathcal O}
\begin{remark}
\chadded{%
The residual factors $R_1,\ldots,R_n$ and the inner factor $T$ required to formulate the optimization problem~\eqref{eqn:matrre_opt_nonstationary} are computed at each RADI step (Algorithm~\ref{alg:radi}) and are therefore available at no extra cost.
The additional cost of one \ac{RRE} extrapolation on top of the RADI iteration is dominated by the thin QR factorization of the concatenated residual factors,
$[R_1 \ \cdots \ R_n] \in \mathbb{R}^{d \times nr}$,
while the orthogonal factor need not be formed explicitly; cf. Algorithm~\ref{alg:nonstationary_lowrank_rre}.
This QR step costs $\Landau(d (nr)^2)$~\acp{FLOP},
where $\Landau(\cdot)$ denotes Landau notation.
Forming the matrix~$U\in\mathbb R^{(nr)^2 \times n}$ requires $\Landau(n^3r^3)$~\acp{FLOP}.
The subsequent QR-based computation of the \ac{RRE} coefficients using Algorithm~\ref{alg:rre} contributes $\Landau((nr)^2 n^2)$~\acp{FLOP},
where the subsequent triangular solves and normalization only add lower-order $\Landau(n^2)$ work.
Hence, the per-extrapolation overhead is $\Landau(d n^2 r^2 + n^4 r^2)$~\acp{FLOP}.
In terms of storage, the extra memory beyond the baseline RADI iteration is the residual window itself, requiring $\Landau(dnr)$ floating-point numbers,
together with $n^3 r^2$ elements for storing the matrix~$U$.}
\end{remark}

To monitor progress and provide a stopping criterion, the residual of the
extrapolant, \(\mathscr{R}(\widehat{X})\), is needed.
Since the extrapolant has a low-rank structure,
$\widehat{X} = \widehat{Z} \widehat{D} \widehat{Z}^\TT$,
the corresponding residual~$\R(\widehat{X})$ directly reveals a similar low-rank structure;
see formula~\eqref{eqn:riccati:residual}.
Lemma~\ref{lem:norm} may again be used to efficiently evaluate the Frobenius or spectral norm.
Interestingly, as the following theorem will show,
$\range(\R(\widehat{X})) \subseteq \range(R_1) \cup\dots\cup \range(R_n)$,
for which the proof naturally provides an efficient approach to recursively obtain a factorization of $\mathscr{R}(\widehat{X})$. The proof is induction-based and provided in Appendix~\ref{append:proof-residual-expression}.

\begin{thm}\label{thm:residual-expression}
	Under the consistent notations in Algorithm~\ref{alg:radi} and Algorithm~\ref{alg:nonstationary_lowrank_rre}, let $\{X_i\}_{i=1}^{n}$ denote the low-rank matrix sequence generated for solving the ARE~\eqref{eqn:riccati}. Then the residual of the $n$-term \ac{RRE} extrapolant $\widehat{X}$, produced by Algorithm~\ref{alg:nonstationary_lowrank_rre}, remains in the range of $[R_1\ R_2\ \dots\ R_n]$, and there exists the factorization
	\[
	\mathscr{R}(\widehat{X}) = [R_1\ R_2\ \dots\ R_n]\mathscr{H}_n[R_1\ R_2\ \dots\ R_n]^\TT,
	\]
	of this residual, where $R_n$ is the residual factor of $\mathscr{R}(X_n)=R_n T R_n^\TT$
	and $\mathscr{H}_n$ is a symmetric matrix of size $np\times np$ that satisfies
	\begin{equation*}
		\mathscr{H}_n\equiv  \mathscr{H}_n(\rr_1,\rr_2,\dots,\rr_n) =
		\begin{cases}
			\begin{bmatrix}
				\rr_1^2T+(\rr_2-\rr_2^2)\Y_2 & (\rr_2-\rr_2^2)(T-\Y_2) \\
				(\rr_2-\rr_2^2)(T-\Y_2)    &  \rr_2^2 T +(\rr_2-\rr_2^2)\Y_2
			\end{bmatrix}, & n=2, \vspace{5pt} \\
			\mathscr{A}_n + \mathscr{B}_n + \mathscr{C}_n,\quad
			& n\ge 3,
		\end{cases}
	\end{equation*}
	where
	\begin{equation*}
		\mathscr{A}_n = \blkdiag
		\left(\mathscr{H}_{n-1}(\rr_1,\rr_2,\dots,\rr_{n-2},\rr_{n-1}+\rr_{n}), \textbf{0}_{p\times p}\right),
	\end{equation*}
	\begin{equation*}
		\mathscr{B}_n = \blkdiag\left(\textbf{0}_{(n-2)p\times (n-2)p},
		\begin{bmatrix}
			(\rr_n^2-2\rr_n)T+(\rr_n-\rr_n^2)\Y_n    &   (\rr_n-\rr_n^2)(T-\Y_n) \\
			(\rr_n-\rr_n^2)(T-\Y_n) & \rr_n^2 T +(\rr_n-\rr_n^2)\Y_n
		\end{bmatrix}\right),
	\end{equation*}
	and
	\begin{equation*}
		\mathscr{C}_n = \widetilde{\mathscr{C}}_n + \widetilde{\mathscr{C}}_n^\TT,\quad
		\widetilde{\mathscr{C}}_n =
		\begin{bNiceArray}[
			first-row,code-for-first-row=\scriptstyle,
			last-col,code-for-last-col=\scriptstyle,
			]{ccc}
			(n-2)p       &  p           &  p            &  \\
			\textbf{0}   & M_n^d - M_n^t    & M_n^t-M_n^d         & (n-1)p \\
			\textbf{0}   & \textbf{0}   & \textbf{0}    & p
		\end{bNiceArray}\ ,
	\end{equation*}
	where
	\begin{align*}
		M_n &= [
			\beta_{n2}\alpha_{2n}C_{2n}^\TT \;\;
			\beta_{n3}\alpha_{3n}C_{3n}^\TT \;\;
			\dots \;\;
			\beta_{n,n-1}\alpha_{n-1,n}C_{n-1,n}^\TT
		]^\TT,
		\\
		M_n^d &= \begin{bmatrix}
			M_n^\TT &
			\textbf{0}
		\end{bmatrix}^\TT,
		\\
		M_n^t &= \begin{bmatrix}
		\textbf{0} &
		M_n^\TT
		\end{bmatrix}^\TT,
	\end{align*}
	and
	\begin{equation*}
		\alpha_{in}=
		\frac{1}{2\sqrt{\sigma_{i-1}\sigma_{n-1}}}\in\mathbb{R},\quad
		\beta_{ni}=\rr_n\sum_{j=1}^{i-1}\rr_j,\quad
		C_{in} = V_i^\TT BH^{-1}B^\TT V_n \in\mathbb{R}^{p\times p}.
	\end{equation*}
\end{thm}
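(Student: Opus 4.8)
The plan is to exploit the quadratic structure of the Riccati operator in~\eqref{eqn:riccati} together with two bookkeeping identities relating the low-rank increments to the residual factors, and then to verify the recursion for $\mathscr{H}_n$ by induction on $n$. Write $\mathscr{R}(X) = \mathscr{L}(X) + \mathscr{Q}(X)$ with linear part $\mathscr{L}(X) := A^\TT X E + E^\TT X A + C^\TT C$ and quadratic part $\mathscr{Q}(X) := -E^\TT X B H^{-1} B^\TT X E$, and set $\delta_i := V_i\tilde D_i V_i^\TT = X_i - X_{i-1}$. The residual update in Algorithm~\ref{alg:radi} gives the key \emph{increment identity}
\[
	E^\TT \delta_i = \tfrac{1}{\sqrt{-2\sigma_{i-1}}}\,(R_i - R_{i-1})\,V_i^\TT,
\]
which is exactly what forces $\mathscr{R}(\widehat X)$ into $\range([R_1\ \dots\ R_n])$. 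The inner-factor identity $C_{ii} = V_i^\TT B H^{-1} B^\TT V_i = 2\sigma_{i-1}(T - \Y_i)$, read off from the definition of $\tilde D_i$ in Algorithm~\ref{alg:radi}, is what converts the pure-quadratic increment contributions into the $T$/$\Y$ combinations appearing in $\mathscr{H}_n$.

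\textbf{Shift-independent splitting.} Because $\sum_i\gamma_i = 1$, the linear part commutes with the convex combination, $\mathscr{L}(\widehat X) = \sum_i\gamma_i\mathscr{L}(X_i)$, so that
\[
	\mathscr{R}(\widehat X) = \sum_{i=1}^n\gamma_i R_i T R_i^\TT + \mathscr{D},
	\qquad
	\mathscr{D} := \tfrac12\sum_{i,j}\gamma_i\gamma_j\,(W_i - W_j)^\TT H^{-1}(W_i - W_j),
\]
with $W_i := B^\TT X_i E$. Using $W_i-W_j=\sum_{k=j+1}^{i}B^\TT\delta_kE$ (for $i>j$) together with the increment identity rewrites $\mathscr{D}$ purely in terms of the $R_k$, the scalars $\alpha_{k\ell}$, and the blocks $C_{k\ell}$; collecting the coefficient of a fixed pair $(k,\ell)$ gives $\tau_{\max(k,\ell)}\bigl(1-\tau_{\min(k,\ell)}\bigr)$ with $\tau_i = \sum_{j\ge i}\gamma_j$, which is the bridge to $\beta_{ni}=\rr_n(1-\tau_i)$. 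This already proves the range statement and exhibits \emph{a} factorization $[R_1\ \dots\ R_n]\,\mathscr{H}_n\,[R_1\ \dots\ R_n]^\TT$; the particular recursive form of $\mathscr{H}_n$ is then established by induction.

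\textbf{Induction.} For $n=2$, only the $(1,2)$ and $(2,1)$ terms of $\mathscr{D}$ survive and give $\mathscr{D}=\gamma_1\gamma_2(R_2-R_1)(\Y_2-T)(R_2-R_1)^\TT$; adding $\gamma_1 R_1TR_1^\TT+\gamma_2R_2TR_2^\TT$ and substituting $\gamma_1=1-\gamma_2$ reproduces $\mathscr{H}_2$ blockwise. For the step I would write the extrapolant~\eqref{eqn:matrre_tau} as $\widehat X_n = \widehat X_{n-1}^{\mathrm m} + \gamma_n\delta_n$, where $\widehat X_{n-1}^{\mathrm m} := \sum_{i=1}^{n-2}\gamma_i X_i + (\gamma_{n-1}+\gamma_n)X_{n-1}$ is the $(n-1)$-term extrapolant with merged last weight (whose cumulative weights still equal $\tau_1,\dots,\tau_{n-1}$). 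Expanding the quadratic operator,
\[
	\mathscr{R}(\widehat X_n) = \mathscr{R}(\widehat X_{n-1}^{\mathrm m}) + \gamma_n\bigl(A^\TT\delta_n E + E^\TT\delta_n A\bigr) - \gamma_n\bigl(E^\TT \widehat X_{n-1}^{\mathrm m} B H^{-1} B^\TT\delta_n E + (\cdot)^\TT\bigr) - \gamma_n^2\,E^\TT\delta_n B H^{-1} B^\TT\delta_n E.
\]
The induction hypothesis turns the first term into $\mathscr{A}_n$, since the merged weight is precisely the argument $\rr_{n-1}+\rr_n$. Rewriting $A^\TT\delta_n E + E^\TT\delta_n A$ via $\mathscr{R}(X_n)-\mathscr{R}(X_{n-1})=R_nTR_n^\TT-R_{n-1}TR_{n-1}^\TT$ and its Riccati corrections makes the linear block combine with the pure-quadratic term $-\gamma_n^2E^\TT\delta_nBH^{-1}B^\TT\delta_nE$ into the coefficient $\gamma_n-\gamma_n^2$, yielding $\mathscr{B}_n$ in the trailing $2\times2$ block positions $n-1,n$.

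\textbf{Cross terms and the main obstacle.} The two cross groups combine into $\gamma_n E^\TT(X_{n-1}-\widehat X_{n-1}^{\mathrm m})BH^{-1}B^\TT\delta_n E + (\cdot)^\TT$; since $X_{n-1}-\widehat X_{n-1}^{\mathrm m} = \sum_{i=1}^{n-2}\gamma_i(X_{n-1}-X_i)$ telescopes into increments (so all dependence on $X_1$ cancels), the increment identity produces, for $k=2,\dots,n-1$, the blocks $\beta_{nk}\alpha_{kn}(R_k-R_{k-1})C_{kn}(R_n-R_{n-1})^\TT$ with $\beta_{nk}=\gamma_n\sum_{j<k}\gamma_j$. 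Expanding each telescoped factor $(R_k-R_{k-1})$ and $(R_n-R_{n-1})$ in the block basis $[R_1\ \dots\ R_n]$ is exactly what assembles these into $\widetilde{\mathscr{C}}_n+\widetilde{\mathscr{C}}_n^\TT$: the forward/backward shift of the stacked block $M_n$ into $M_n^{d}$ and $M_n^{t}$ encodes the $+R_k$/$-R_{k-1}$ signs, while the restriction to block-columns $n-1$ and $n$ encodes the $+R_n$/$-R_{n-1}$ signs. I expect this index bookkeeping---matching $M_n^{d}-M_n^{t}$ against the telescoping of $(R_k-R_{k-1})$ over all $k$, placing each piece correctly in the $np\times np$ grid, and checking that the stray $(n-1,n-1)$ blocks agree with $\mathscr{A}_n+\mathscr{B}_n$---to be the principal obstacle; once the increment and $C_{ii}$ identities are fixed, the remaining algebra is routine.
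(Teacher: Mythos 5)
Your proposal is correct, and its inductive core is essentially the paper's own argument: the same merge of the last two weights so that the induction hypothesis delivers $\mathscr{A}_n$, the same cancellation producing the coefficient $\gamma_n-\gamma_n^2$ on the $(R_n-R_{n-1})(\Y_n-T)(R_n-R_{n-1})^\TT$ term for $\mathscr{B}_n$, and the same telescoping of $\sum_{j\le n-2}\gamma_j(X_{n-1}-X_j)$ into the increments with coefficients $\beta_{nk}$ for $\mathscr{C}_n$. (Your two bookkeeping identities are exactly the paper's auxiliary relations in disguise; routing the linear term through $\mathscr{R}(X_n)-\mathscr{R}(X_{n-1})$ rather than through the one-sided identity for $A^\TT V_n\Y_n^{-1}V_n^\TT E$ is a cosmetic difference.) What you add that the paper does not have is the global, non-inductive splitting
\begin{equation*}
	\mathscr{R}(\widehat X)
	= \sum_{i=1}^n \gamma_i R_i T R_i^\TT
	+ \tfrac12\sum_{i,j}\gamma_i\gamma_j\,(W_i-W_j)^\TT H^{-1}(W_i-W_j),
	\qquad W_i = B^\TT X_i E,
\end{equation*}
valid because the affine part commutes with convex combinations and the quadratic part obeys the variance identity. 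This buys two things the paper's proof obtains only implicitly: an immediate proof of the range statement $\range(\mathscr{R}(\widehat X))\subseteq\range([R_1\ \dots\ R_n])$ without any induction, and a closed-form coefficient $\tau_{\max(k,\ell)}\bigl(1-\tau_{\min(k,\ell)}\bigr)$ for every increment pair, which explains in one line why the recursion produces $\beta_{nk}=\gamma_n(1-\tau_k)$ and why the diagonal pairs carry $\tau_k-\tau_k^2$. The remaining work you flag --- placing the $(R_k-R_{k-1})C_{kn}(R_n-R_{n-1})^\TT$ blocks into the $np\times np$ grid to match $M_n^{d}-M_n^{t}$ --- is indeed just bookkeeping and checks out against the paper's $\widetilde{\mathscr{C}}_n$; there is no gap.
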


\begin{remark}
  The result of Theorem~\ref{thm:residual-expression} provides an expression for the residual of the extrapolant in terms of the \ac{RRE} weights $\gamma_i$, $i = 1, \ldots, n$.
Thus, one could in principle directly minimize the Frobenius or spectral norm of this residual (applying Lemma~\ref{lem:norm} to ensure computational feasibility).
However, we note that Theorem~\ref{thm:residual-expression} treats the specific case of the \ac{ARE}, while the \ac{RRE} extrapolation technique we propose in this work can be applied to \emph{any} iterative process for which the residuals are available.
Furthermore, finding the \ac{RRE} weights~\eqref{eqn:matrre_opt_more_efficient} can be done even if the coefficients of the \ac{ARE} are not directly available (making this method less intrusive).
\end{remark}

We now consider the application of the \ac{RRE} extrapolant based on~\eqref{eqn:matrre_opt_efficient} in a cycling scheme (Algorithm~\ref{alg:cycling_rre}).
RADI is guaranteed to converge to the unique stabilizing solution of the \ac{ARE} if
(i) the shift parameters satisfy the non-Blaschke condition~\eqref{eqn:nonblaschke_condition}; 
(ii)\label{item:psd_residual} it is initialized using a stabilizing initial guess $Y$ with $\mathscr{R}(Y) \succeq 0$ (in case the spectrum of $(A, E)$ is in the open left half plane, $Y = 0$ may be used); 
and, in case the spectrum of $(A, E)$ is not in the open left half plane,
(iii) RADI is applied to the closed-loop \ac{ARE} associated with $Y$~\cite{massoudi2016analysis}. 
Property (ii) is mentioned in~\cite[p.~308]{benner2018radi} but without an explanation.
We will investigate the conditions on the \ac{RRE} coefficients $\gamma_1, \ldots, \gamma_n$, for which these constraints on $Y$ and $\mathscr{R}(Y)$ are satisfied at the beginning of each cycle (i.e., after each extrapolation step).
For notational compactness,
we derive these conditions for $i=1,\dots,n$ as in Section~\ref{sct:preliminary},
and define $V_1 := Z_1$ as well as $\tilde D_1 := D_1$.

To investigate when $\widehat{X} \succeq 0$, we note that each iterate $X_i$ can be written as a sum of positive semi-definite products $V_j D_j V_j^\TT$:
\begin{equation}
	X_i = \sum_{j=1}^{i} V_j D_j V_j^\TT.
\end{equation}
Hence, a sufficient condition that $\widehat{X} = \sum_{i=1}^{n} \gamma_i X_i = \sum_{i=1}^{n} \tau_i V_i \tilde D_i V_i^\TT \succeq 0$, is $\tau_i \geq 0$, $i = 1, \ldots, n$.
This gives the following set of linear inequality constraints on $\gamma_i$, $i = 1, \ldots, n$, to ensure $\widehat{X} \succeq 0$:
\begin{equation}
	\sum_{i=j}^{n} \gamma_j \geq 0, \quad j = 1, \ldots, n.
	\label{eqn:matrre_gamma_bounds_X1}
\end{equation}

For the condition $\mathscr{R}(\widehat{X}) \succeq 0$, the next theorem gives a sufficient and necessary condition for the residual of the two-term extrapolant ($n = 2$) to be positive semi-definite.
Its proof is given in Appendix~\ref{append:proof-residual-psd-cond}.
\begin{thm}\label{thm:residual-psd-cond}
Under the same notations as in Theorem~\ref{thm:residual-expression}, if the inner residual factor $T$ is positive semi-definite, then the residual of the two-term extrapolant
\begin{equation*}
	\R(\rr_1X_1 + \rr_2X_2) =
	\begin{bmatrix}
		R_1 \ R_2
	\end{bmatrix}\mathscr{H}_2\begin{bmatrix}
		R_1^\TT \\ R_2^\TT
	\end{bmatrix}
\end{equation*}
is positive semi-definite if and only if $0\le \rr_1,\rr_2\le 1$.
\end{thm}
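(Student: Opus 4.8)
The plan is to reduce the positive semi-definiteness of the residual to that of the small symmetric matrix $\mathscr{H}_2$ supplied by Theorem~\ref{thm:residual-expression}, and then to characterize when $\mathscr{H}_2 \succeq 0$ by inspecting its quadratic form. Throughout I would use the convexity constraint $\rr_1 + \rr_2 = 1$ inherited from the \ac{RRE} extrapolant, so that the condition $0 \le \rr_1, \rr_2 \le 1$ is equivalent to $\rr_1, \rr_2 \ge 0$; note that without this constraint the weight $\rr_1$ enters $\mathscr{H}_2$ only through $\rr_1^2$ and its sign could not be detected, so the constraint is essential.

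First I would record the single structural fact about RADI that drives the sign analysis. Setting $S := \Y_2 - T$, the increment formula in Algorithm~\ref{alg:radi} gives $\Y_2 = T - \tfrac{1}{2\sigma_1}(V_2^\TT B)H^{-1}(V_2^\TT B)^\TT$, and since $\sigma_1 \in \RRneg$ and $H$ is symmetric positive definite we have $-\tfrac{1}{2\sigma_1} > 0$, whence $S \succeq 0$. Together with the hypothesis $T \succeq 0$, this is the only positivity input required. Next I would compute the quadratic form of $\mathscr{H}_2$: for $w = [u^\TT\ v^\TT]^\TT$, substituting $\Y_2 = T + S$ and expanding the blocks, the cross terms in $T$ cancel and the $S$-terms combine into a perfect square, giving
\begin{equation*}
	w^\TT \mathscr{H}_2 w = (\rr_1^2 + \lambda)\,u^\TT T u + (\rr_2^2 + \lambda)\,v^\TT T v + \lambda\,(u-v)^\TT S (u-v), \qquad \lambda := \rr_2 - \rr_2^2.
\end{equation*}
Imposing $\rr_1 + \rr_2 = 1$ simplifies $\lambda = \rr_1\rr_2$, $\rr_1^2 + \lambda = \rr_1$, and $\rr_2^2 + \lambda = \rr_2$, so that
\begin{equation*}
	w^\TT \mathscr{H}_2 w = \rr_1\,u^\TT T u + \rr_2\,v^\TT T v + \rr_1\rr_2\,(u-v)^\TT S (u-v).
\end{equation*}

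This identity makes both directions transparent. For sufficiency, if $\rr_1, \rr_2 \ge 0$ then all three coefficients are nonnegative and, since $T, S \succeq 0$, the form is nonnegative for every $w$; hence $\mathscr{H}_2 \succeq 0$, and the residual $[R_1\ R_2]\mathscr{H}_2[R_1\ R_2]^\TT$ is positive semi-definite by congruence (no rank hypothesis is needed in this direction). For necessity I would argue by contraposition: if $\rr_1 < 0$, then $\rr_2 = 1 - \rr_1 > 0$, and testing $v = 0$ yields $w^\TT \mathscr{H}_2 w = \rr_1\,u^\TT(T + \rr_2 S)u$; choosing $u$ with $u^\TT T u > 0$ (possible since $T \succeq 0$ is assumed nonzero) makes this strictly negative, so $\mathscr{H}_2 \not\succeq 0$. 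The case $\rr_2 < 0$ is symmetric via $u = 0$.

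The one point requiring care, and the main obstacle, is transferring the indefiniteness of $\mathscr{H}_2$ back to the residual in the necessity direction, since a priori $[R_1\ R_2]\mathscr{H}_2[R_1\ R_2]^\TT \succeq 0$ is weaker than $\mathscr{H}_2 \succeq 0$. I would close this gap by passing to the thin QR factorization $[R_1\ R_2] = \tilde Q \tilde R$ already used in the paper: because $\tilde Q$ has orthonormal columns, the residual is positive semi-definite if and only if $\tilde R \mathscr{H}_2 \tilde R^\TT \succeq 0$, and in the nondegenerate situation where $[R_1\ R_2]$ has full column rank the triangular factor $\tilde R$ is square and invertible, so this congruence is a bijection on the positive semi-definite cone and the equivalence with $\mathscr{H}_2 \succeq 0$ is exact. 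Equivalently, one realizes the destructive vectors $[u^\TT\ 0]^\TT$ and $[0\ v^\TT]^\TT$ as $[R_1^\TT x;\, R_2^\TT x]$ for suitable $x \in \mathbb{R}^d$, which is possible precisely because $\range(R_1) \not\subseteq \range(R_2)$ and vice versa when the combined residual factor is of full rank. I would state this nondegeneracy explicitly, since a genuinely rank-deficient $[R_1\ R_2]$ (or $T = 0$) would make the residual trivially semi-definite and break the ``only if'' claim.
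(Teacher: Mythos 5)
Your proof is correct and reaches the same conclusion by a genuinely different route. The paper establishes sufficiency by exhibiting an explicit square-root factorization $\mathscr{H}_2 = HH^\TT$: it writes $T = F_1F_1^\TT$, $\Y_2 = F_2F_2^\TT$ and solves a small system of scalar equations for the block coefficients, a system that is solvable in the reals exactly when $\rr_1\rr_2 \ge 0$; necessity is then obtained by contradiction from positive semi-definiteness of the $(1,1)$-block (via the Schur complement) together with the RADI fact $\Y_2 \succcurlyeq T$. You instead substitute $S := \Y_2 - T \succeq 0$ once and collapse everything into the single quadratic-form identity
\begin{equation*}
	w^\TT \mathscr{H}_2 w
	= \rr_1\,u^\TT T u + \rr_2\,v^\TT T v + \rr_1\rr_2\,(u-v)^\TT S (u-v),
	\qquad w = \begin{bmatrix} u \\ v \end{bmatrix},
\end{equation*}
which I have checked against the block formula for $\mathscr{H}_2$ and which makes both directions immediate: nonnegative weights give $\mathscr{H}_2 \succeq 0$, and a negative weight is exposed by the test vectors with $v=0$ or $u=0$. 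Your version is shorter and isolates exactly the two positivity inputs needed ($T \succeq 0$ and $S \succeq 0$, the latter from $\sigma_1 \in \RRneg$ and $H \succ 0$), whereas the paper's construction has the minor virtue of producing an explicit Cholesky-type factor of $\mathscr{H}_2$. On the necessity direction you are in fact more careful than the paper: the paper passes without comment from $[R_1\ R_2]\mathscr{H}_2[R_1\ R_2]^\TT \succeq 0$ to $\mathscr{H}_2 \succeq 0$, which requires $[R_1\ R_2]$ to have full column rank, and its final contradiction also degenerates when $T = \Y_2$ (e.g., $T=0$); you name both nondegeneracy assumptions explicitly and justify the transfer through the invertible triangular QR factor. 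These caveats apply equally to the published argument, so flagging them is a strength of your write-up rather than a defect.
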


Although Theorem~\ref{thm:residual-psd-cond} only treats the case $n = 2$, we numerically observed (for an instance of the \ac{ARE} using randomly generated coefficients) that a similar condition holds for $n = 3$.
\chadded{More precisely,} when $T\succeq 0$, \chadded{we have} $\mathscr{R}(\widehat{X}) \succeq 0$ if and only if $0 \leq \rr_i \leq 1$, $i = 1, 2, 3$.
Based on these results, we formulate the following conjecture.

\begin{conjecture}\label{conj:are_rre_residual}
Under the same notations as in Theorem~\ref{thm:residual-expression}, if the inner residual factor $T$ is positive semi-definite, then the residual of the extrapolant
\begin{equation*}
	\R(\rr_1X_1 + \cdots + \rr_n X_n) =
	\begin{bmatrix}
		R_1 \ \dots \ R_n
	\end{bmatrix}\mathscr{H}_n\begin{bmatrix}
		R_1^\TT \\ \vdots \\ R_n^\TT
	\end{bmatrix}
\end{equation*}
is positive semi-definite if and only if
\begin{equation}
	0\le \rr_i \le 1, \quad i = 1, \ldots, n.
	\label{eqn:matrre_gamma_bounds_res}
\end{equation}
\end{conjecture}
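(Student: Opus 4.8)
The plan is to separate the \emph{if} and \emph{only if} directions, proving the former by a convexity argument that bypasses $\mathscr{H}_n$ entirely and the latter by induction on $n$ using the recursion of Theorem~\ref{thm:residual-expression}.

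First I would record the reduction underpinning both directions. By Theorem~\ref{thm:residual-expression}, $\mathscr{R}(\widehat{X}) = R\,\mathscr{H}_n\,R^\TT$ with $R := [R_1\ \dots\ R_n]$. Whenever the concatenated residual factors are linearly independent---the generic situation for RADI, where each step $R_{i+1} = R_i + \sqrt{-2\sigma_i}\,E^\TT V_{i+1}\tilde D_{i+1}$ enlarges the range by a fresh increment---the map $R^\TT$ is surjective and $\mathscr{R}(\widehat{X})\succeq 0$ holds if and only if $\mathscr{H}_n\succeq 0$; in the rank-deficient case one first compresses onto $\range(R)$ exactly as in Lemma~\ref{lem:norm}. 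This step also makes clear why some nondegeneracy of the $R_i$ is needed: if two residual factors coincided, a negative weight could be absorbed and necessity would fail.

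For sufficiency I would not use $\mathscr{H}_n$ at all, but exploit that $\mathscr{R}$ is quadratic and that $\sum_i \gamma_i = 1$. Setting $G_i := B^\TT X_i E$ and $\bar{G} := \sum_i \gamma_i G_i = B^\TT \widehat{X} E$, the affine part of $\mathscr{R}$ distributes over the convex combination while the quadratic part produces a covariance remainder, giving
\begin{equation*}
	\mathscr{R}(\widehat{X})
	= \sum_{i=1}^{n} \gamma_i R_i T R_i^\TT
	+ \sum_{i=1}^{n} \gamma_i (G_i - \bar{G})^\TT H^{-1} (G_i - \bar{G}).
\end{equation*}
Because $\sum_i \gamma_i = 1$, the bounds $0\le\gamma_i\le 1$ are equivalent to $\gamma_i\ge 0$ (each $\gamma_i = 1 - \sum_{j\ne i}\gamma_j \le 1$ automatically), i.e.\ $\gamma$ lies in the probability simplex. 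On that set both sums above are nonnegative combinations of positive semi-definite matrices---using $T\succeq 0$ for the first and $H\succ 0$ for the second---so $\mathscr{R}(\widehat{X})\succeq 0$. This settles the \emph{if} direction cleanly, and it also identifies the two competing contributions: the \emph{linear} part $\sum_i\gamma_i R_i T R_i^\TT$, which is the $R$-coordinate image of $\blkdiag(\gamma_1 T,\dots,\gamma_n T)$, and the covariance part, which is precisely what the $\Y_i$ and the cross terms $C_{in}$ encode in $\mathscr{H}_n$.

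For necessity I would induct on $n$, the base case $n=2$ being Theorem~\ref{thm:residual-psd-cond}. The recursion $\mathscr{H}_n = \mathscr{A}_n + \mathscr{B}_n + \mathscr{C}_n$ is built for this: the block $\mathscr{A}_n = \blkdiag(\mathscr{H}_{n-1}(\gamma_1,\dots,\gamma_{n-2},\gamma_{n-1}+\gamma_n),\mathbf{0})$ embeds the $(n-1)$-term matrix with the last two weights merged, so compressing $\mathscr{H}_n$ onto its leading $(n-1)p$ coordinates and applying the inductive hypothesis should force $\gamma_1,\dots,\gamma_{n-2}$ and the sum $\gamma_{n-1}+\gamma_n$ into $[0,1]$, while a test vector supported on the trailing two increment blocks---mirroring the $n=2$ analysis against $\mathscr{B}_n$---would separate $\gamma_{n-1}$ from $\gamma_n$. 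The hard part, and the reason the statement is only conjectured for $n\ge 3$, is the coupling block $\mathscr{C}_n$: its entries $C_{in} = V_i^\TT B H^{-1} B^\TT V_n$ link every earlier increment to the last one with no definite sign, so they may partially cancel the negative diagonal contribution created when a weight leaves $[0,1]$. The crux is therefore a compensation estimate showing these cross terms can never fully offset that deficit---equivalently, that the Schur complement of $\mathscr{H}_n$ with respect to its trailing block still inherits the inductive structure despite $\mathscr{C}_n$. I expect essentially all of the difficulty to concentrate here, rather than in the bookkeeping of $\mathscr{A}_n$ and $\mathscr{B}_n$.
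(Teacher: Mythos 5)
Before anything else, note that the paper does \emph{not} prove this statement: it is posed as Conjecture~\ref{conj:are_rre_residual} precisely because only the two-term case is established (Theorem~\ref{thm:residual-psd-cond}), with numerical evidence for $n=3$. Against that backdrop, your ``if'' direction is correct and is a genuine contribution beyond what the paper contains. The identity
\begin{equation*}
	\mathscr{R}\Bigl(\sum_{i=1}^{n}\gamma_i X_i\Bigr)
	= \sum_{i=1}^{n}\gamma_i R_i T R_i^\TT
	+ \sum_{i=1}^{n}\gamma_i\,(G_i-\bar G)^\TT H^{-1}(G_i-\bar G),
	\qquad G_i = B^\TT X_i E,\quad \bar G=\sum_{i=1}^{n}\gamma_i G_i,
\end{equation*}
checks out: the affine part of $\mathscr{R}$ distributes over the combination because $\sum_i\gamma_i=1$, and the quadratic correction is exactly the stated covariance term. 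With $T\succeq 0$, $H\succ 0$ and $\gamma_i\ge 0$ (equivalent to $0\le\gamma_i\le 1$ on the simplex), both sums are positive semi-definite, so sufficiency holds for every $n$. This is cleaner and strictly more general than the explicit factorization of $\mathscr{H}_2$ used in the paper's proof of Theorem~\ref{thm:residual-psd-cond}, and it bypasses $\mathscr{H}_n$ entirely.

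The gap is the ``only if'' direction, and you have located it but not closed it. Your induction plan stalls exactly where you say: the coupling block $\mathscr{C}_n$ has entries $C_{in}=V_i^\TT BH^{-1}B^\TT V_n$ of no definite sign, and no compensation estimate is given ruling out that these cross terms restore positive semi-definiteness when some $\gamma_i$ leaves $[0,1]$. Moreover, the intermediate step is shakier than you suggest: the leading $(n-1)p\times(n-1)p$ principal submatrix of $\mathscr{H}_n$ is \emph{not} $\mathscr{H}_{n-1}(\gamma_1,\dots,\gamma_{n-2},\gamma_{n-1}+\gamma_n)$, because both $\mathscr{B}_n$ and $\mathscr{C}_n$ contribute to its trailing block, so the inductive hypothesis cannot be applied to a principal submatrix as stated; and the hypothesis itself concerns the residual, not $\mathscr{H}_{n-1}$, so one must also carry a rank condition through the induction. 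On that point your caveat is sharper than the paper: passing from $\mathscr{R}(\widehat X)\succeq 0$ to $\mathscr{H}_n\succeq 0$ requires $[R_1\ \dots\ R_n]$ to have full column rank, an assumption the paper's own necessity argument for $n=2$ uses silently. In sum, you have settled one half of the conjecture for all $n$; the other half — the part the paper also leaves open — remains a plan rather than a proof.
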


The bounds~\eqref{eqn:matrre_gamma_bounds_res} to ensure positive semi-definiteness of the residual are stronger than the bounds~\eqref{eqn:matrre_gamma_bounds_X1} to ensure positive semi-definiteness of the extrapolant.
As a result, applying \ac{RRE} to \ac{ARE} in a restarting scheme leads to a search space for the coefficients that is more limited than for a non-restarting scheme (in which case~\eqref{eqn:matrre_gamma_bounds_res} is not required).
Hence, the residual of the extrapolant may also be larger than without the limitation~\eqref{eqn:matrre_gamma_bounds_res}, leading to an increased number of iterations and computational time required until convergence.
This matter will be numerically investigated in Section~\ref{sct:numerical_examples}.

The bounds on the \ac{RRE} coefficients~\eqref{eqn:matrre_gamma_bounds_X1} and~\eqref{eqn:matrre_gamma_bounds_res} are required because the solution of an \ac{ARE} is in general non-unique and one is typically only interested in the maximum (in the Loewner ordering) \ac{PSD} solution.
These bounds prevent the extrapolated sequence from converging to a non-\ac{PSD} solution.
In the special case of Lyapunov equations (when $B = 0$), the solution is unique and, hence, these bounds are not required.
Furthermore, the bounds~\eqref{eqn:matrre_gamma_bounds_res} ensuring the residual remains \ac{PSD} are not required when using a non-restarting scheme.

\begin{remark}\label{remark:convergence_psd_solution}
  Although the proposed bounds prevent the extrapolated sequence from converging to a non-\ac{PSD} solution, we cannot guarantee convergence to the desired \ac{PSD} solution.
  Finding convergence guarantees for \ac{RRE} applied on nonlinear systems is difficult and usually requires the introduction of heuristic assumptions, see~\cite{sidi2020convergence} for more details and some results.
\end{remark}

\section{Numerical examples}\label{sct:numerical_examples}
In this section, we evaluate the proposed method on several numerical examples of large-scale matrix equations with sparse coefficients.
More specifically, each example is associated with a first-order state-space model of the form
\begin{equation}
\begin{aligned}
	E \dot{z}(t) & = A z(t) + B u(t), \\
		    y(t) & = C z(t),
\end{aligned}
\label{eqn:ss}
\end{equation}
where $z(t) \in \mathbb{R}^{d}$ denotes the state, $u(t) \in \mathbb{R}^{p}$ is the input and $y(t) \in \mathbb{R}^{q}$ is the output
(in Section~\ref{sct:triplechain}, we consider a second-order system which we represent in the above first-order form).
We focus on examples in which the state dimension is large compared to the dimension of the input and output.
The operators $E, A, B$ and $C$ are matrices, where $E, A \in \mathbb{R}^{d \times d}$ are sparse, $E$ is regular and the spectrum of $(A, E)$ is in the open left half plane.

Associated with each model~\eqref{eqn:ss} is the \ac{ARE}~\eqref{eqn:riccati} using the coefficients of the state-space model~\eqref{eqn:ss} as the coefficients of the equation and $H = I_p$ unless otherwise specified.

We use the RADI method to produce iterates $X_1, X_2, \ldots$ for the \ac{ARE}.
We stopped the iterative process once the relative 2-norm of the residual reached the threshold $10^{-10}$.
Unless otherwise indicated, the extrapolation is performed with $n = 3$, where $n$ denotes the number of iterates used in the extrapolation.

All numerical experiments were performed on a compute server equipped with an AMD EPYC 7763 64-core processor and with 512~GiB of Random Access Memory (RAM). The numerical examples were run using 8~cores and required at most approximately 70~GiB of RAM. 
We used MATLAB R2020b on Linux and RADI based on version 3.1 of the MATLAB version of the M.E.S.S. toolbox~\cite{SaaKB-mmess-all-versions}.
We also use RADI when iteratively solving the Lyapunov equations in Sections~\ref{sct:toeplitz} and~\ref{sct:rail} (in which case the RADI method coincides with the ADI method).
To generate the shift parameters, projection shifts~\cite{benner2014self} are used.
Note that the concept and usage of projection-based shifts generalize directly from linear matrix equations to \ac{ARE}s, see, e.g., \cite{benner2018radi,benner2020numerical}.
We used the function \texttt{mess\_res2\_norms} to \chadded{estimate the residual norm $\lVert\R(\widehat{X})\rVert_2$ by means of a Lanzcos process.
The scaled residual norms are plotted} associated with each of the following modes:
\begin{enumerate}
	\item RADI\@: The RADI iterates. Indicated in plots by a solid green disk if initialized from zero and by a green circle if initialized from a cycle restart.
	\item RADI+RRE (non-cycling): The RADI iterates accelerated by non-cycling \ac{RRE} (Algorithm~\ref{alg:noncycling_rre}).
	\item RADI+RRE (cycling): The RADI iterates accelerated by cycling \ac{RRE} (Algorithm~\ref{alg:cycling_rre}).
		After a cycle restart, the residuals are re-assembled akin to formula~\eqref{eqn:riccati:residual}.
\end{enumerate}
\chadded{In both RRE modes we enforce bound~\eqref{eqn:matrre_gamma_bounds_X1} on the RRE coefficients to ensure a positive semi-definite solution.}

In our implementation, the main computational cost of \ac{RRE} is the QR-decomposition of the residual factors (Line~\ref{algline:nonstationary_lowrank_rre-QR} in Algorithm~\ref{alg:nonstationary_lowrank_rre}).
We use the MATLAB function \texttt{qr} to accomplish this.
However, the Q-term of the QR-decomposition is not needed, which may be exploited to reduce the computational cost~\cite{BenLP08}.
Furthermore, we compute the \ac{RRE} extrapolant in each iteration (provided at least $n$ iterates are available).
Computing the extrapolant only when significant speed-up is expected could substantially lower the computational overhead.
Instead of  investigating such optimizations of the implementation, we focus here on the acceleration in terms of the number of iterations; see Appendix~\ref{append:runtime} for a brief discussion on runtimes.

\subsection{Example 1: Toeplitz system matrix}\label{sct:toeplitz}
\subsubsection{Nonlinear}
This is a synthetic example based on the modifications of~\cite[Example~1]{lin2015new} introduced in~\cite[Example~4.4]{benner2020numerical}.
It is given by the following matrices:
\begin{equation}
A=\left[\begin{array}{ccccccc}
2.8 & 1 & 1 & 1 & 0 & \dots & \\
-1 & 2.8 & 1 & 1 & 1 & 0 & \ddots \\
0 & -1 & 2.8 & 1 & 1 & 1 & \ddots \\
\vdots & \ddots & \ddots & \ddots & \ddots & \ddots & \ddots \\
& & & & 0 & -1 & 2.8
\end{array}\right], \quad E=I,
\end{equation}
with $d = \num{100,000}$.
The entries of the matrices $B$ and $C$ are drawn independently from a normal distribution
(together with the MATLAB command \lstinline{ rng(1) } to ensure reproducibility of the results).  
$B$ is scaled such that $\norm{B} = 1$.
We take $p = 5$ and try several values of $q$ (1, 20 and 40) to investigate the influence of this parameter on the results.
(Recall that $p$ and $q$ are the dimensions of the input and output.)
We set $H = 10^{-4} \cdot I_p$.

The results are shown in Figure~\ref{fig:results_toeplitz}.
Note first the staircase pattern present in all plots, in which a decrease in residual is sometimes followed by several iterations in which the residual stays almost constant.
For $q = 1$, i.e., the $C$ matrix has a single row, the improvement in residual from the extrapolation is modest for the first 20 iterations after which it is significantly improved.
As a result, the convergence criterion (the horizontal line in the figures) is reached already after 30 iterations instead of 44 iterations for the non-accelerated sequence.
In cycling mode, the 2 restarts in iterations 20 and 28 further reduce the residual, although the convergence criterion is still only reached after 30 iterations (the same as for the non-cycling mode).

When we increase $q$, it can be observed from Figures~\ref{fig:results_toeplitz_q20} and~\ref{fig:results_toeplitz_q40} that the number of iterations required by RADI increases to more than 60.
In non-cycling mode, for $q = 20$ \ac{RRE} reduces the number of iterations from 64 to just 63,
and for $q = 40$ it decreases the number of iterations from 76 to 64.

\begin{remark}~\label{remark:cycling_mode_stagnation}
In cycling mode, we observe that the iterates generated by RADI initialized from the new cycle have a non-PSD residual.
We suspect that this causes the stagnation of both the RADI iterates and the \ac{RRE} extrapolates from iteration 55.

Furthermore, as we did not implement condition~\eqref{eqn:matrre_gamma_bounds_res},
we observed the residual to be indefinite on every cycle restart in all of Section~\ref{sct:numerical_examples}.
It remains inconclusive whether this violation of property~(ii),
see page~\pageref{item:psd_residual},
has an impact on the convergence of RADI\@.
\end{remark}

During the first iterations, when the relative residual is close to 1, the residual after \ac{RRE} is for many iterations larger than the residual of RADI\@.
This is because the \ac{ARE} is a nonlinear equation and, hence, optimization~\eqref{eqn:matrre_opt} only approximately minimizes the residual of the extrapolant; see Section~\ref{sct:preliminary} but with $\reseq(\cdot)\neq\resit(\cdot)$.
The approximation becomes more accurate for smaller residuals, which is reflected by the fact that the \ac{RRE} residual is much smaller than the RADI residual for those iterations.

\begin{figure}[t]
	\centering
	\newcommand\common{
		xmax = 80,
		ymax = 10,
		ymin = 1e-14,
		ytickten = {-15,-12,...,0}, 
	}
	\begin{subfigure}{0.5\textwidth}
		\centering
		\rreplot{benner2020_q1_nonlinear}{%
			legend entries={RADI, RADI+RRE (non-cycling), RADI+RRE (cycling)},
			legend to name=fig:results_toeplitz:legend,
			\common,
		}
		\caption{$q = 1$.}%
        \label{fig:results_toeplitz_q1}
	\end{subfigure}
	\hfill
	\begin{subfigure}{0.5\textwidth}
		\centering
		\rreplot{benner2020_q20_nonlinear}{\common}
		\caption{$q = 20$.}%
        \label{fig:results_toeplitz_q20}
	\end{subfigure}
	\\
	\bigskip
	\begin{subfigure}[c]{0.5\textwidth}
		\centering
		\rreplot{benner2020_q40_nonlinear}{\common}
		\caption{$q = 40$.}%
        \label{fig:results_toeplitz_q40}
	\end{subfigure}
	\hfill
	\ref*{fig:results_toeplitz:legend}
	\hspace*{\fill}
	\caption{%
		Results of the nonlinear Toeplitz example with varying numbers of outputs \(q\).
		\rreplotcommoncaption}%
    \label{fig:results_toeplitz}
\end{figure}

\subsubsection{Linear}
The previous results illustrate the ineffectiveness of the cycling strategy for the nonlinear \ac{ARE}
(see Remark~\ref{remark:cycling_mode_stagnation}).
In this section, therefore, we study a linear equation, for which RADI coincides with the ADI method.
Namely, the Lyapunov equation obtained by setting $B = 0$.
Because the spectrum of $(A, E)$ is in the open left half plane,
the solution is unique (in contrast to the nonlinear \ac{ARE}).
This property may result in improved effectiveness of the cycling strategy.

The results are shown in Figure~\ref{fig:results_toeplitz_linear}.
For $q = 1$, the effect of \ac{RRE} is negligible, indicating optimality of the ADI iterates.
For $q = 20$ and $q = 40$, on the other hand, both non-cycling and cycling \ac{RRE} reduce the number of iterations required to reach the convergence threshold.
In this linear setting, no stagnation of the cycling strategy is observed.

\begin{figure}[t]
	\centering
	\newcommand\common{
		xmax = 30,
		ymax = 10,
		ymin = 1e-14,
		ytickten = {-15,-12,...,0}, 
	}
	\begin{subfigure}{0.5\textwidth}
		\centering
		\rreplot{benner2020_q1_linear}{%
			legend entries={RADI, RADI+RRE (non-cycling), RADI+RRE (cycling)},
			legend to name=fig:results_toeplitz_linear:legend,
			\common,
		}
		\caption{$q = 1$.}%
		\label{fig:results_toeplitz_q1_linear}
	\end{subfigure}
	\hfill
	\begin{subfigure}{0.5\textwidth}
		\centering
		\rreplot{benner2020_q20_linear}{\common}
		\caption{$q = 20$.}%
		\label{fig:results_toeplitz_q20_linear}
	\end{subfigure}
	\\
	\bigskip
	\begin{subfigure}[c]{0.5\textwidth}
		\centering
		\rreplot{benner2020_q40_linear}{\common}
		\caption{$q = 40$.}%
		\label{fig:results_toeplitz_q40_linear}
	\end{subfigure}
	\hfill
	\ref*{fig:results_toeplitz:legend}
	\hspace*{\fill}
	\caption{%
		Results of the linear Toeplitz example with varying numbers of outputs \(q\).
		\rreplotcommoncaption}%
	\label{fig:results_toeplitz_linear}
\end{figure}

\subsection{Example 2: chip}\label{sct:chip}
This example concerns a model of the convective flow in a microelectronic chip structure~\cite{morMooRGetal04}.
The 3D structure is semi-discretized using \ac{FEM} with $d = \num{20,082}$ states representing temperature.
The $q = 5$ outputs of the model correspond to temperatures at 5 points in the geometry.
The single input ($p = 1$) represents the power dissipated by a heat element in the structure.
The matrices have been downloaded from the MOR Wiki website~\cite{morwiki_convection}.
Due to the convection present in the problem, $A \neq A^\TT$, and the $E$ matrix is symmetric and has full rank.
In the experiments, we test several variations of the \ac{ARE}~\eqref{eqn:riccati} where we scale the quadratic term therein, i.e.,
\begin{equation}
	H = \lambda I_p.
\end{equation}
Increasing $\lambda$ makes the corresponding \ac{ARE} closer to a Lyapunov equation, while decreasing $\lambda$ enhances the nonlinearity of the equation.
The results for several values of $\lambda$ are displayed in Figure~\ref{fig:results_chip}.

We can observe that the benefit of \ac{RRE} is greater for smaller $\lambda$.
Thus, as the quadratic term becomes less significant, the iterates generated by RADI become closer to optimal (in the sense of optimization problem~\eqref{eqn:matrre_opt}).
For $\lambda = 10^{-4}$ (Figure~\ref{fig:results_chip_lambda0.0001}), the acceleration is greatest.
In iteration 41, the \ac{RRE} residual is more than 3 orders of magnitude better than the RADI residual.
Hence, we also start a new cycle at that iteration.
From that iteration onwards, the cycling scheme exhibits a lower residual than the non-cycling scheme, illustrating the potential benefit of the former.
However, the convergence threshold of $10^{-10}$ is not reached; similar to the Toeplitz system matrix example in Figure~\ref{fig:results_toeplitz_q20} (Remark~\ref{remark:cycling_mode_stagnation}).

\begin{figure}[t]
	\centering
	\newcommand\common{
		xmax = 70,
		ymin = 5e-12,
		ytickten = {-12,-9,...,0}, 
	}
	\begin{subfigure}{0.5\textwidth}
		\centering
		\rreplot{chip_lambda1}{\common}
		\caption{$\lambda = 10^{0}$.}%
		\label{fig:results_chip_lambda1}
	\end{subfigure}
	\hfill
	\begin{subfigure}{0.5\textwidth}
		\centering
		\rreplot{chip_lambda0.01}{\common}
		\caption{$\lambda = 10^{-2}$.}%
		\label{fig:results_chip_lambda0.01}
	\end{subfigure}
	\\
	\bigskip
	\begin{subfigure}[c]{0.5\textwidth}
		\centering
		\rreplot{chip_lambda0.0001}{
			legend entries={RADI, RADI+RRE (non-cycling), RADI+RRE (cycling)},
			legend to name=fig:results_chip:legend,
			\common,
		}
		\caption{%
			$\lambda = 10^{-4}$.
		}%
		\label{fig:results_chip_lambda0.0001}
	\end{subfigure}
	\hfill
	\ref*{fig:results_chip:legend}
	\hspace*{\fill}
	\caption{%
		Results of the chip example with varying \(H = \lambda I_p\).
		\rreplotcommoncaption}%
	\label{fig:results_chip}
\end{figure}

\subsection{Example 3: steel rail profile}\label{sct:rail}
This example concerns the heat transfer in a steel rail profile~\cite{morwiki_steel}.
It considers a 2D structural cross-section of the rail semi-discretized using \ac{FEM} with $d = \num{317,377}$.
There are $p = 7$ inputs which parameterize the temperature on the boundary of the steel rail.
In the control problem associated with this benchmark, these inputs represent the temperature of the cooling fluid used to control the internal temperature of the steel rail.
Related to these inputs, the $q = 6$ outputs are chosen in line with the control objective of minimizing temperature differences at specific points in the geometry.
See~\cite{BenS05b} for more details.

For this example,
we consider the \ac{ARE} with $H = 10^{-4} \cdot I_p$ and, in addition, two Lyapunov equations
(which are linear special cases of the \ac{ARE}).
First, the Lyapunov equation associated with the controllability Gramian $P \in \mathbb{R}^{d \times d}$:
\begin{equation}
	A PE^\TT + EP A^\TT + B B^\TT = 0.
	\label{eqn:lyap_contr}
\end{equation}
Second, the Lyapunov equation associated with the observability Gramian $E^\TT QE \in \mathbb{R}^{d \times d}$:
\begin{equation}
	A^\TT QE + E^\TT Q A + C^\TT C = 0.
	\label{eqn:lyap_obsv}
\end{equation}

The results for the \ac{ARE} and two Lyapunov equations are shown in Figure~\ref{fig:results_rail}.
Since in this example $A$ and $E$ are symmetric (with $E \neq I$ positive definite), the shift parameters are real-valued.
For the Lyapunov equations, the \ac{RRE} acceleration does not significantly reduce the residual, indicating that the iterates generated by ADI are already nearly optimal (in the sense of optimization problem~\eqref{eqn:matrre_opt}).
For the \ac{ARE}, on the other hand, the \ac{RRE} acceleration significantly reduces the residual compared to the non-accelerated RADI in iterations where stagnation occurs.
In this way, the number of iterations required to reach the convergence criterion (the dashed horizontal line in Figure~\ref{fig:results_rail_riccati}) is reduced from 107 to 93.
In cycling mode, the restart in iteration 66 for the \ac{ARE} again leads to stagnation; see Remark~\ref{remark:cycling_mode_stagnation}.

\begin{figure}[t]
	\centering
	\newcommand\common{
		xmax = 110,
		ymin = 5e-15,
		ytickten = {-15,-12,...,0}, 
	}
	\begin{subfigure}{0.5\textwidth}
		\centering
		\rreplot{steel_profile_lyap-contr}{
			legend entries={(R)ADI, (R)ADI+RRE (non-cycling), (R)ADI+RRE (cycling)}, 
			legend to name=fig:results_rail:legend,
			\common,
		}
		\caption{\ac{ADI} for Controllability Lyapunov eq.~\eqref{eqn:lyap_contr}.}%
        \label{fig:results_rail_lyap_contr}
	\end{subfigure}%
	\hfill
	\begin{subfigure}{0.5\textwidth}
		\centering
		\rreplot{steel_profile_lyap-obsv}{\common}
		\caption{\ac{ADI} for Observability Lyapunov eq.~\eqref{eqn:lyap_obsv}.}%
        \label{fig:results_rail_lyap_obsv}
	\end{subfigure}%
	\\
	\bigskip
	\begin{subfigure}[c]{0.5\textwidth}
		\centering
		\rreplot{steel_profile_riccati}{\common}
		\caption{RADI for \ac{ARE}~\eqref{eqn:riccati}.}%
        \label{fig:results_rail_riccati}
	\end{subfigure}%
	\hfill
	\ref*{fig:results_rail:legend}
	\hspace*{\fill}
	\caption{%
		Results of the steel rail profile example.
		\rreplotcommoncaption}%
    \label{fig:results_rail}
\end{figure}

\subsection{Example 4: triple chain}~\label{sct:triplechain}
In this section, we consider an example of a mechanical system.
The example is based on~\cite[Example~2]{truhar2009efficient} and consists of a configuration of interconnected masses, springs, and dampers.
In second-order formulation, the model is given by
\begin{equation}
\begin{aligned}
	M_\mathrm{mass} \ddot{x} + C_\mathrm{damp} \dot{x} + K_\mathrm{stiff} x & = B_u u, \\
		y & = C_y x.
\end{aligned}
\label{eqn:ss_secondorder}
\end{equation}
We use \num{30,001} masses, giving $d = \num{60,002}$ ($p = q = 1$) in the following first-order formulation~\eqref{eqn:ss}:
\begin{align*}
	E = \begin{bmatrix} -K_\mathrm{stiff} & 0 \\ 0 & M_\mathrm{mass} \end{bmatrix}, \quad & A = \begin{bmatrix} 0 & -K_\mathrm{stiff} \\ -K_\mathrm{stiff} & -C_\mathrm{damp} \end{bmatrix}, \\
	C = \begin{bmatrix} C_y & 0 \end{bmatrix}, \quad & B = \begin{bmatrix} 0 & B_u^\TT \end{bmatrix}^\TT.
\end{align*}
Next, we apply a perfect shuffle permutation such that $A$ and $E$ have almost banded structure~\cite{kressner2019lowrank}, which we observed to be able to significantly improve numerical conditioning.
The Rayleigh damping factors $\alpha$ and $\beta$ which relate the damping matrix $C_\mathrm{damp}$ to the mass matrix $M_\mathrm{mass}$ and the stiffness matrix $K_\mathrm{stiff}$ as in
\begin{equation}
	C_\mathrm{damp} = \alpha M_\mathrm{mass} + \beta K_\mathrm{stiff}
\end{equation}
are both set to 0.1.
The viscosity of the dampers is set to $\nu = 5$ and for the masses and stiffness coefficients we use the following values (see~\cite[Figure~5.1]{truhar2009efficient}):
\begin{equation}
\begin{aligned}
	m_0 & = 10, & m_1 & = 1, & m_2  & = 2, & m_3  & = 3, \\
	k_0 & = 50, & k_1 & = 10, & k_2 & = 20, & k_3 & = 1.
\end{aligned}
\end{equation}

For this example, we evaluate the method using the \ac{ARE}~\eqref{eqn:riccati} and several settings for the window size $n$.
The results are displayed in Figure~\ref{fig:results_triplechain}.
The residual of the extrapolants is (significantly) higher than the residual of the RADI iterates for all window sizes~$n$ for the first 30 iterations.
After these initial 30 iterations,
in the non-cycling mode,
we would expect the \ac{RRE} residual to decrease faster for larger~$n$,
while for $n=6$ and $n=12$ the \ac{RRE} residual sometimes even exceed the RADI residuals up to iteration 50.
All these observations are likely caused by the nonlinearity of the \ac{ARE}~\eqref{eqn:riccati}.
Starting at iteration 56, during the last stagnation phase before the termination of RADI,
the picture changes: the wider $n=12$ yields a small but consistent acceleration,
while the \ac{RRE} residuals of the narrower~$n$ collapse onto the RADI residuals.
This may, however, be caused by the (default) optimizer of MATLAB we used and its associated numerical tolerances.
Enabling cycling mode in this example again leads to stagnation of the residual; see Remark~\ref{remark:cycling_mode_stagnation}.
Possibly, restarting before the iterate is sufficiently close to the solution exacerbates the numerical ill-conditioning of the subsequent RADI iterates.

\begin{figure}[t]
	\centering
	\newcommand\common{
		xmax = 82,
		ymax = 1e7,
		ymin = 1e-12,
		ytickten = {-12,-8,...,8}, 
	}
	\begin{subfigure}{0.5\textwidth}
		\centering
		\rreplot{triple_chain_M3}{
			legend entries={RADI, RADI+RRE (non-cycling), RADI+RRE (cycling)},
			legend to name=fig:results_triplechain:legend,
			\common,
		}
		\caption{$n = 3$.}%
        \label{fig:results_triplechain_M3}
	\end{subfigure}%
	\hfill
	\begin{subfigure}{0.5\textwidth}
		\centering
		\rreplot{triple_chain_M6}{\common}
		\caption{$n = 6$.}%
        \label{fig:results_triplechain_M5}
	\end{subfigure}%
	\\
	\bigskip
	\begin{subfigure}[c]{0.5\textwidth}
		\centering
		\rreplot{triple_chain_M12}{\common}
		\caption{$n = 12$.}%
        \label{fig:results_triplechain_M10}
	\end{subfigure}%
	\hfill
	\ref*{fig:results_triplechain:legend}
	\hspace*{\fill}
	\caption{%
		Results of the triple chain example with varying \ac{RRE} window size \(n\).
		\rreplotcommoncaption}%
    \label{fig:results_triplechain}
\end{figure}

\subsection{Error of RRE solution on small versions of the examples}
An \ac{ARE} can have many solutions, while typically one is only interested in the unique stabilizing solution.
Under some technical assumptions, as discussed in Section~\ref{sct:application}, RADI is guaranteed to converge to this stabilizing solution.
As mentioned in Remark~\ref{remark:convergence_psd_solution}, no such guarantees exist for the \ac{RRE} solution.

Hence, we confirm that the \ac{RRE} solution is close to the desired \ac{PSD} solution on small-scale versions of all numerical examples presented in this section (except the chip example presented in Section~\ref{sct:chip}).
We compute the relative error of the \ac{RRE} solution $\widehat{X}$ with respect to the exact solution $X$ (computed using the MATLAB function \texttt{icare}) given by $\norm{X}^{-1} \cdot \norm{ X - \widehat{X} }$.
The results are as follows:
\begin{itemize}
	\item Section~\ref{sct:toeplitz} (Toeplitz system matrix): size $d = 500$ and relative error $5 \cdot 10^{-14}$.
	\item Section~\ref{sct:rail} (steel rail profile): size $d = 371$ and relative error $7 \cdot 10^{-13}$.
	\item Section~\ref{sct:triplechain} (triple chain): size $d = 302$ and relative error $4 \cdot 10^{-9}$.
\end{itemize}

\section{Conclusion and outlook}\label{sct:conclusion}
We proposed an extension of \ac{RRE} that allows it to be applied to nonstationary fixed-point iterations and problems involving large-scale low-rank matrices.
We formulated an efficient least-squares problem to find the \ac{RRE} extrapolant.
Although our scheme can be applied to arbitrary sequences of low-rank matrices, we specifically discussed its application to accelerating iterative solvers of large-scale matrix equations (in particular the \ac{ARE}).
We analyzed some theoretical properties of the extrapolation applied to the RADI algorithm and showed that linear equality constraints on the \ac{RRE} coefficients are sufficient to prevent convergence to non-\ac{PSD} solutions for a non-restarting \ac{RRE} scheme.
However, we also conjecture that for a restarting \ac{RRE} scheme additional constraints are required which diminish the acceleration potential for \acp{ARE}.
The numerical examples that we treated illustrated the potential of the approach.
We specifically investigated how the convergence acceleration may be affected by the problem size, the type of equation and the number of iterates used for extrapolation.

In the future, we intend to investigate more comprehensively the benefits of \ac{RRE} in terms of computational time, and this requires developing a systematic strategy to adaptively enables \ac{RRE} in the iteration.
\chadded{%
A study of \ac{RRE} acceleration for multi-term Sylvester equations has already been investigated in separate work~\cite{RREmultiterm26}, which demonstrates substantial improvements in both convergence speed and execution time.}
Another interesting problem is to derive convergence guarantees for our \ac{RRE} method applied to \acp{ARE}.
As mentioned in Remark~\ref{remark:convergence_psd_solution}, convergence of \ac{RRE} for general nonlinear systems is a difficult task if we only have knowledge of some regularity properties of the nonlinearity.
For the specific case of \acp{ARE}, we have an explicit form of the nonlinearity (formula~\eqref{eqn:Xi_fp_nonstationary_matrix} and Algorithm~\ref{alg:radi}) which may allow us to obtain convergence results that, in contrast to existing results in literature, do not require heuristic assumptions that are hard to verify.

\appendix

\section{Runtime and late RRE computation}%
\label{append:runtime}

In the main part of the manuscript we refrained from reporting execution times,
since applying \ac{RRE} at every possible step is, of course, excessive.
A potential remedy is to only compute extrapolates once RADI is sufficiently close to the desired tolerance.
The quantification of this closeness is clearly problem dependent, and developing either a rigorous characterization or an effective heuristic is  beyond the scope of this manuscript.

As Figures~\ref{fig:results_toeplitz} and~\ref{fig:results_rail} show,
\ac{RRE} is most effective when the underlying process plateaus just before the target tolerance.
Therefore, one may chose some residual thresholds below which extrapolation is enabled,
\unskip\footnote{%
	See documentation of \texttt{mess\_lrradi}; specifically
	\texttt{opts.radi.rre.enable\_tol} and
	\texttt{opts.radi.rre.check\_res\_tol}.
}
determined a posteriori from Figures~\ref{fig:results_toeplitz} through~\ref{fig:results_rail}.
Table~\ref{tab:runtime} shows the resulting runtimes of RADI without \ac{RRE} and of
RADI with late non-cycling \ac{RRE}
for the promising candidates among all examples.
We omit the examples for which \ac{RRE} does not reduce the number of RADI iterations required to reach the prescribed residual reduction of $10^{-10}$,
i.e., where one can not expect a runtime advantage using \ac{RRE}.
In the shown subset of experiments,
\ac{RRE} reduced the total runtime by up to \qty{20}{\percent},
while increasing runtime only for a single experiment by \qty{1}{\percent}.

\begin{table}[t]
	\centering
	\begin{tabular}{%
			@{} 
			l 
			S[table-format=3] 
			S[table-format=3.2] 
			S[table-format=1e-2] 
			c 
			S[table-format=2] 
			S[table-format=2.2] 
			@{} 
		}
		\toprule
		& \multicolumn{2}{c}{{RADI}} & \multicolumn{4}{c}{{RADI+RRE}} \\
		\cmidrule(rl){2-3}
		\cmidrule(l){4-7}
		Example & {\#RADI} & {time [s]} & {Threshold} & {\#RRE} & {\#RADI} & {time [s]} \\
		\midrule
		Figure~\ref{fig:results_toeplitz_q1} & 44 & 1.67 & 1e-9 & 1 & 30 & 1.67 \\
		Figure~\ref{fig:results_toeplitz_q40} & 76 & 77.12 & 1e-9 & 1 & 65 & 66.27 \\
		\addlinespace
		Figure~\ref{fig:results_toeplitz_q20_linear} & 23 & 6.16 & 1e-9 & 1 & 20 & 4.9 \\
		Figure~\ref{fig:results_toeplitz_q40_linear} & 22 & 10.26 & 1e-9 & 1 & 20 & 10.4 \\
		\addlinespace
		Figure~\ref{fig:results_rail_riccati} & 107 & 171.68 & 5e-10 & 5 & 93 & 154.67 \\
		\bottomrule
	\end{tabular}
	\caption{%
		Runtimes of RADI versus RADI plus late \ac{RRE}.
		The columns designate
		the underlying example,
		the number of RADI iterations and total runtime,
		the threshold below which \ac{RRE} becomes active,
		number of \ac{RRE} extrapolates computed,
		the number of RADI iterations and total runtime,
		respectively.
		All timings are in seconds.
	}\label{tab:runtime}
\end{table}

\section{Proof of Theorem~\ref{thm:residual-expression}}
\label{append:proof-residual-expression}

\begin{proof}

\newcommand\KK[1]{\chadded{K_{#1}}}
\newcommand\KT[1]{\chadded{K_{#1}^\TT}}
\newcommand\DeltaKK[1]{\chadded{\Delta K_{#1}}}
\newcommand\DeltaKT[1]{\chadded{\Delta K_{#1}^\TT}}

\chadded{To simplify notation, we define}
\begin{equation*}
	\KK{k} := B^\TT X_k E
	\quad\text{and}\quad
	\DeltaKK{k} :=
	B^\TT (X_k - X_{k-1}) E =
	B^\TT V_k\Y_k^{-1}V_k^\TT E.
\end{equation*}
Note that Algorithm~\ref{alg:nonstationary_lowrank_rre} is essentially Algorithm~\ref{alg:radi} equipped with \ac{RRE}.
Before starting the proof we need the following auxiliary identities, which can
be shown, after some manipulation, from the modified RADI iteration
(Algorithm~\ref{alg:radi}), that is,
\begin{multline}\label{eq:AVkYinvVkT}
	A^\TT V_k\Y_k^{-1}V_k^\TT E
	= R_{k-1}T(R_k^\TT-R_{k-1}^\TT)
	+ \KT{k-1} H^{-1} \DeltaKK{k}
	- \sigma_{k-1} E^\TT V_k\Y_k^{-1}V_k^\TT E
\end{multline}
and
\begin{align}\label{eq:long-symterm}
	-\rr_k^2 \DeltaKT{k} H^{-1} \DeltaKK{k}
	&= -\rr_k^2 E^\TT V_k\Y_k^{-1}\cdot 2\sigma_{k-1}(T-\Y_k) \cdot \Y_k^{-1}V^\TT_k E \notag\\
	&= \rr_k^2(R_k-R_{k-1})T(R_k^\TT-R_{k-1}^\TT) +
	2\rr_k^2\sigma_{k-1}E^\TT V_k\Y_k^{-1}V_k^\TT E,
\end{align}
where
\begin{equation}\label{eq:Re-symterm}
	2\sigma_k E^\TT V_k\Y_k^{-1}V_k^\TT E =-(R_k-R_{k-1})\Y_k(R_k^\TT-R_{k-1}^\TT).
\end{equation}

The proof is by induction on the number $n$ of extrapolation terms. First consider the case $n=2$. From the
assumption $\rr_1+\rr_2 = 1$ and the equation (with $V_1\equiv Z_1$ and $\Y_1^{-1}\equiv D_1$)
\begin{equation}\label{eq:X_k}
	X_k=\sum_{i=1}^{k}V_i\Y_i^{-1}V_i^\TT, \quad k=1,2,\dots,
\end{equation}
we have $\R(\rr_1X_1 + \rr_2X_2) =	\R(X_1 + \rr_2 V_2\Y_2^{-1}V_2^\TT)$. Using~\eqref{eqn:riccati},
the expression then can be expanded as
\begin{multline*}
	\R(\rr_1X_1 + \rr_2X_2) = \R(X_1) + \rr_2(A^\TT V_2\Y_2^{-1}V_2^\TT E + \square^\TT) \\
	-(\rr_1+\rr_2)\rr_2( \KT{1} H^{-1} \DeltaKK{2} + \square^\TT)
	- \rr_2^2 \DeltaKT{2} H^{-1} \DeltaKK{2},
\end{multline*}
where and henceforth we write the ``$\square^\TT$'' symbol rightmost inside a bracket as a shorthand for the transpose of the preceding terms (for the sake of brevity).
It follows from~\eqref{eq:AVkYinvVkT} and~\eqref{eq:long-symterm} and then~\eqref{eq:Re-symterm} that
\begin{align*}
	\MoveEqLeft[3]
	\R(\rr_1X_1 + \rr_2X_2)
	\\
	={}& \R(X_1) + \rr_2\left(R_{1}T(R_2^\TT-R_{1}^\TT)
		+\square^\TT\right) - 2\rr_2\sigma_1 E^\TT V_2\Y_2^{-1}V_2^\TT E \\
	& + \big((\rr_2-(\rr_1+\rr_2)\rr_2)
		\KT{1} H^{-1} \DeltaKK{2} +\square^\TT\big) \\
	& + \rr_2^2(R_2-R_{1})T(R_2^\TT-R_{1}^\TT) +
		2\rr_2^2\sigma_1E ^\TT V_2\Y_2^{-1}V_2^\TT E
	\\
	={}& \R(X_1) + \rr_2\left(R_{1}T(R_2^\TT-R_{1}^\TT) +\square^\TT\right)
	+ \rr_2^2(R_2-R_{1})T(R_2^\TT-R_{1}^\TT) \\
	& + 2(\rr_2^2-\rr_2)\sigma_1E^\TT V_2\Y_2^{-1}V_2^\TT E
	\\
	={}& \R(X_1) + \rr_2\left(R_{1}T(R_2^\TT-R_{1}^\TT) +\square^\TT\right)
	+ \rr_2^2(R_2-R_{1})T(R_2^\TT-R_{1}^\TT) \\
	& + (\rr_2-\rr_2^2)(R_2-R_{1})\Y_2(R_2^\TT-R_{1}^\TT).
\end{align*}
The final expression can equivalently be written as the factorization
\begin{equation*}
	\R(\rr_1X_1 + \rr_2X_2) =
	[ R_1 \ R_2 ]
	\mathscr{H}_2
	[ R_1 \ R_2 ]^\TT
	,\quad
	\mathscr{H}_2\equiv  \mathscr{H}_2(\rr_1,\rr_2) = \mathscr{A}_2 + \mathscr{B}_2,
\end{equation*}
where
\begin{equation*}
	\mathscr{A}_2 = \blkdiag(T, \textbf{0}_{p\times p}),\quad
	\mathscr{B}_2 = \begin{bmatrix}
		(\rr_2^2-2\rr_2)T+(\rr_2-\rr_2^2)\Y_2 & (\rr_2-\rr_2^2)(T-\Y_2) \\
		(\rr_2-\rr_2^2)(T-\Y_2)    &  \rr_2^2 T +(\rr_2-\rr_2^2)\Y_2
	\end{bmatrix}.
\end{equation*}
This clearly shows $\range (\R(\rr_1X_1 + \rr_2X_2))\subseteq \range([R_1\ R_2])$.
Although in the expression for $\mathscr{H}_2$ there is only one free parameter $\rr_2$ as a result of the constraint $\rr_1+\rr_2=1$, here we still write out explicitly the dependence on all the parameters as $\mathscr{H}_2(\rr_1,\rr_2)$; same for all the
$\mathscr{H}_k$ below.

We will see later that the case $n=2$ is special in that the terms
$ \KT{1} H^{-1} \DeltaKK{2} $ and its transpose disappear because their
coefficients cancel out as $\rr_2-(\rr_1+\rr_2)\rr_2=0$, which is a property that does not hold for
arbitrary $n\neq 2$.

In order to see the pattern, we also present the case of $n=3$ in detail (as the \emph{base case}
for induction).
We have, using the assumption $\rr_1+\rr_2+\rr_3=1$ and~\eqref{eq:AVkYinvVkT}--\eqref{eq:X_k},
\begin{align*}
	\MoveEqLeft[3]
	\R(\rr_1X_1 + \rr_2X_2 + \rr_3X_3)
	\\
	={}& \R(\rr_1X_1 + (\rr_2+\rr_3)X_2 +
	\rr_3V_3\Y_3^{-1}V_3^\TT)
	\\
	={}& \R(\rr_1X_1 + (\rr_2+\rr_3)X_2) + \rr_3(A^\TT V_3\Y_3^{-1}V_3^\TT E + \square^\TT)\\
	&- \rr_3^2 \DeltaKT{3} H^{-1} \DeltaKK{3} \\
	&-\rr_3\big(E^\TT(\rr_1 X_1+(\rr_2+\rr_3)X_2)B H^{-1} \DeltaKK{3} +\square^\TT\big)
	\\
	={}& \R(\rr_1X_1 + (\rr_2+\rr_3)X_2) + \rr_3\left(R_{2}T(R_3^\TT-R_{2}^\TT) +\square^\TT\right)
	- 2\rr_3\sigma_2 E^\TT V_3\Y_3^{-1}V_3^\TT E\\
	&+ \rr_3\big(E^\TT((1-\rr_2-\rr_3)X_2-\rr_1X_1)B H^{-1} \DeltaKK{3}
	+\square^\TT\big) \\
	&+ \rr_3^2(R_3-R_{2})T(R_3^\TT-R_{2}^\TT) +
	2\rr_3^2\sigma_2E^\TT V_3\Y_3^{-1}V_3^\TT E
	\\
	={}& \R(\rr_1X_1 + (\rr_2+\rr_3)X_2) + \rr_3\left(R_{2}T(R_3^\TT-R_{2}^\TT) +\square^\TT\right) \\
	&+ \rr_3^2(R_3-R_{2})T(R_3^\TT-R_{2}^\TT) \\
	&+ (\rr_3-\rr_3^2)(R_3-R_{2})\Y_3(R_3^\TT-R_{2}^\TT)\\
	&+ \rr_3\big(E^\TT\rr_1(X_2-X_1)B H^{-1} \DeltaKK{3} +\square^\TT\big),
\end{align*}
where $\range (\R(\rr_1X_1 + (\rr_2+\rr_3)X_2))\subseteq \range([R_1\ R_2])$ from the case of $n=2$, and so we only need to investigate the last two terms:
\begin{equation*}
	E^\TT(X_2-X_1)B H^{-1} \DeltaKK{3}  +\square^\TT
	= \DeltaKT{2} H^{-1} \DeltaKK{3}
	+ \DeltaKT{3} H^{-1} \DeltaKK{2}
\end{equation*}
by definition of $\DeltaKK{2}$.
Defining
\begin{equation}\label{eq:alpha-Cik}
	\alpha_{ik}=
	\frac{1}{2\sqrt{\sigma_{i-1}\sigma_{k-1}}}\in\mathbb{R},\qquad
	C_{ik} = V_i^\TT B H^{-1}B^\TT V_k \in\mathbb{R}^{p\times p},
\end{equation}
we have
\begin{equation*}
	\DeltaKT{2} H^{-1} \DeltaKK{3} + \square^\TT
	=  \alpha_{23}  ((R_2-R_1)  C_{23}  (R_3^\TT - R_2^\TT)
	    + (R_3 - R_2)C_{23}^\TT(R_2^\TT-R_1^\TT) ),
\end{equation*}
and, therefore,
\begin{equation*}
	\rr_3 (\rr_1 \DeltaKT{2} H^{-1} \DeltaKK{3} +\square^\TT) =
	[R_1\ R_2\ R_3]\mathscr{C}_3 [R_1\ R_2\ R_3]^\TT,
\end{equation*}
where
\begin{equation*}
	\mathscr{C}_3 = \rr_3\rr_1 \alpha_{23}
	\begin{bmatrix}
		\textbf{0} & C_{23} & -C_{23} \\
		C_{23}^\TT  & -C_{23} - C_{23}^\TT & C_{23} \\
		-C_{23}^\TT &  C_{23}^\TT & \textbf{0}
	\end{bmatrix}.
\end{equation*}
Then, the factorization follows:
\begin{align*}
	\R(\rr_1X_1 + \rr_2X_2 + \rr_3X_3) =
	[R_1 \ R_2 \ R_3]
	\mathscr{H}_3
	[R_1 \ R_2 \ R_3]^\TT,
\end{align*}
where
\begin{align*}
  \mathscr{H}_3 \equiv  \mathscr{H}_3(\rr_1,\rr_2,\rr_3) = \mathscr{A}_3 + \mathscr{B}_3 +
	\mathscr{C}_3,
\end{align*}
with
\begin{equation*}
	\mathscr{A}_3 = \blkdiag(\mathscr{H}_2(\rr_1,\rr_2+\rr_3), \textbf{0}_{p\times p}),
\end{equation*}
and
\begin{equation*}
	\mathscr{B}_3 = \blkdiag\left(\textbf{0}_{p\times p},
	\begin{bmatrix}
		(\rr_3^2-2\rr_3)T+(\rr_3-\rr_3^2)\Y_3    &   (\rr_3-\rr_3^2)(T-\Y_3) \\
		(\rr_3-\rr_3^2)(T-\Y_3) & \rr_3^2 T +(\rr_3-\rr_3^2)\Y_3
	\end{bmatrix}\right).
\end{equation*}
This clearly shows $\range (\R(\rr_1X_1 + \rr_2X_2 + \rr_3X_3))
\subseteq \range([R_1\ R_2\ R_3])$.

Assume as the \textit{induction hypothesis} ($n=k-1$)
$\range (\R(\sum_{i=1}^{k-1}\zeta_i X_i))\subseteq \range([R_1\ R_2\ \cdots\
R_{k-1}])$
for any set of real numbers $\{\zeta_1,\zeta_2,\dots,\zeta_{k-1}\}$ such that  $\sum_{i=1}^{k-1}\zeta_i=1$, and, specifically,
\begin{equation*}
	\R\left(\sum_{i=1}^{k-1}\zeta_i X_i\right) =
	[	R_1 \ R_2 \ \cdots \ R_{k-1}]
	\mathscr{H}_{k-1}(\zeta_1,\zeta_2,\dots,\zeta_{k-1})
	[	R_1 \ R_2 \ \cdots \ R_{k-1}]^\TT
\end{equation*}
holds for some symmetric matrix
$\mathscr{H}_{k-1}(\zeta_1,\zeta_2,\dots,\zeta_{k-1})\in\mathbb{R}^{(k-1)p\times (k-1)p}$.
We now prove
\begin{equation*}
  \range \left(\R\left(\sum_{i=1}^{k}\rr_i X_i\right)\right)\subseteq
  \range([R_1\ R_2\ \cdots\ R_{k}])
\end{equation*}
with $\sum_{i=1}^{k}\rr_i=1$ and show how to efficiently obtain the factorization of the residual
of the $k$-term extrapolant. Similarly, by using the assumption $\sum_{i=1}^{k}\rr_i=1$
and~\eqref{eq:AVkYinvVkT}--\eqref{eq:X_k}, we have
\begin{align*}
	\MoveEqLeft[3]
	\R(\rr_1 X_1 + \rr_2 X_2 + \cdots+ \rr_k X_k) \\
	={}& \textstyle \R\big(\sum_{j=1}^{k-2}\rr_{j}X_j
	+ (\rr_{k-1}+\rr_k)X_{k-1} + \rr_k V_k Y_k^{-1}V_k^\TT \big)
	\\
	={}& \textstyle \R\big(\sum_{j=1}^{k-2}\rr_{j}X_j + (\rr_{k-1}+\rr_k)X_{k-1}\big)
	+ \rr_k(A^\TT V_k\Y_k^{-1}V_k^\TT E + \square^\TT) \\
	&- \rr_k^2 \DeltaKT{k} H^{-1} \DeltaKK{k} \\
	& \textstyle -\rr_k\big(E^\TT(\sum_{j=1}^{k-2}\rr_{j}X_j +
	(\rr_{k-1}+\rr_k)X_{k-1}) B H^{-1} \DeltaKK{k} +\square^\TT\big)
	\\
	={}& \textstyle \R\big(\sum_{j=1}^{k-2}\rr_{j}X_j + (\rr_{k-1}+\rr_k)X_{k-1}\big)\\
    &+\rr_k\big(R_{k-1}T(R_k^\TT-R_{k-1}^\TT) +\square^\TT\big) -
	  2\rr_k\sigma_{k-1} E^\TT V_k\Y_k^{-1}V_k^\TT E \\
	&+ \textstyle \rr_k\big(E^\TT((1-\rr_{k-1}-\rr_k)X_{k-1}-\sum_{j=1}^{k-2}\rr_{j}X_j)
	B H^{-1} \DeltaKK{k} + \square^\TT\big) \\
	&+  \textstyle  \rr_k^2(R_k-R_{k-1})T(R_k^\TT-R_{k-1}^\TT) +
	2\rr_k^2\sigma_{k-1} E^\TT V_k\Y_k^{-1}V_k^\TT E
	\\
	={}&\textstyle \R\big(\sum_{j=1}^{k-2}\rr_{j}X_j + (\rr_{k-1}+\rr_k)X_{k-1}\big)
	+ \rr_k(R_{k-1}T(R_k^\TT-R_{k-1}^\TT) +\square^\TT)\\
	&{} + \rr_k^2(R_k-R_{k-1})T(R_k^\TT-R_{k-1}^\TT) +
		(\rr_k-\rr_k^2)(R_k-R_{k-1})\Y_k(R_k^\TT-R_{k-1}^\TT) \\
	&\textstyle  + \rr_k\big(E^\TT\sum_{j=1}^{k-2}\rr_{j}(X_{k-1}-X_j)B H^{-1} \DeltaKK{k} +\square^\TT\big).
\end{align*}
We have, using~\eqref{eq:X_k} again,
\begin{align*}
	\MoveEqLeft[3]
	\rr_k\sum_{j=1}^{k-2}\rr_{j}(X_{k-1}-X_j) \\
	={}& \rr_k\sum_{j=1}^{k-2}\rr_{j}\big(
	\sum_{i=1}^{k-1}V_i\Y_i^{-1}V_i^\TT - \sum_{i=1}^{j}V_i\Y_i^{-1}V_i^\TT \big)
	\\
	={}& \textstyle \rr_k\sum_{j=1}^{k-2}\rr_{j} \sum_{i=j+1}^{k-1}V_i\Y_i^{-1}V_i^\TT
	\\
	={}& \textstyle \rr_k\rr_1V_2\Y_2^{-1}V_2^\TT + \rr_k(\rr_1+\rr_2)V_3\Y_3^{-1}V_3^\TT + \cdots\\
	&+ \rr_k(\rr_1+\rr_2+\cdots+\rr_{k-2}) V_{k-1}\Y_{k-1}^{-1}V_{k-1}^\TT \\
	=:& \textstyle \sum_{i=2}^{k-1} \beta_{ki}V_{i}\Y_{i}^{-1}V_{i}^\TT,
\end{align*}
where we have defined $\beta_{ki}=\rr_k\sum_{j=1}^{i-1}\rr_j$.
Therefore, we have
\begin{equation*}
	\DeltaKT{i} H^{-1} \DeltaKK{k}
	=\alpha_{ik}(R_i-R_{i-1})C_{ik}(R_k^\TT-R_{k-1}^\TT),
\end{equation*}
using the definitions from~\eqref{eq:alpha-Cik}. After some manipulation,
\begin{equation*}
	\rr_k\left(\sum_{j=1}^{k-2}\rr_{j}E^\TT(X_{k-1}-X_j) B H^{-1} \DeltaKK{k}
	+\square^\TT\right) =
	[R_1\ R_2\ \cdots\ R_k]\mathscr{C}_k [R_1\ R_2\ \cdots\ R_k]^\TT,
\end{equation*}
where
\begin{equation*}
	\mathscr{C}_k = \widetilde{\mathscr{C}}_k + \widetilde{\mathscr{C}}_k^\TT,\quad
	\widetilde{\mathscr{C}}_k =
	\begin{bNiceArray}[
		first-row,code-for-first-row=\scriptstyle,
		last-col,code-for-last-col=\scriptstyle,
		]{ccc}
		(k-2)p       &  p           &  p            &  \\
		\textbf{0}   & M_k^d-M_k^t          & M_k^t-M_k^d      & (k-1)p \\
		\textbf{0}   & \textbf{0}   & \textbf{0}    & p
	\end{bNiceArray}\; , \quad k\ge 3,
\end{equation*}
and
\begin{align*}
	M_k &= [
	\beta_{k2}\alpha_{2k}C_{2k}^\TT \;\;
	\beta_{k3}\alpha_{3k}C_{3k}^\TT \;\;
	\dots \;\;
	\beta_{k,k-1}\alpha_{k-1,k}C_{k-1,k}^\TT
	]^\TT,
	\\
	M_k^d &= [ M_k^\TT \ \textbf{0} ]^\TT,
	\\
	M_k^t &= [ \textbf{0} \ M_k^\TT ]^\TT.
\end{align*}
Then, the factorization follows:
\begin{equation*}
	\R\left(\sum_{i=1}^{k}\rr_i X_i\right)
	=
	[R_1 \ R_2 \ \cdots \ R_k]
	\mathscr{H}_k
	[R_1 \ R_2 \ \cdots \ R_k]^\TT,
\end{equation*}
where
\begin{equation*}
	\mathscr{H}_k
	\equiv  \mathscr{H}_k(\rr_1,\rr_2,\dots,\rr_k) = \mathscr{A}_k + \mathscr{B}_k + \mathscr{C}_k
\end{equation*}
and
\begin{equation*}
	\mathscr{A}_k = \blkdiag
	\left(\mathscr{H}_{k-1}(\rr_1,\rr_2,\dots,\rr_{k-2},\rr_{k-1}+\rr_{k}), \textbf{0}_{p\times
	p}\right).
\end{equation*}
Here, the existence of $\mathscr{H}_{k-1}(\rr_1,\rr_2,\dots,\rr_{k-2},\rr_{k-1}+\rr_{k})\equiv
\mathscr{H}_{k-1}$ such that
\[
\R\left(\sum_{j=1}^{k-2}\rr_{j}X_j + (\rr_{k-1}+\rr_k)X_{k-1}\right)= [R_1 \ R_2 \ \cdots \ R_{k-1}]
\mathscr{H}_{k-1}
[R_1 \ R_2 \ \cdots \ R_{k-1}]^\TT,
\]
follows from the inductive hypothesis, and
\begin{equation*}
	\mathscr{B}_k = \blkdiag\left(\textbf{0}_{(k-2)p\times (k-2)p},
	\begin{bmatrix}
		(\rr_k^2-2\rr_k)T+(\rr_k-\rr_k^2)\Y_k    &   (\rr_k-\rr_k^2)(T-\Y_k) \\
		(\rr_k-\rr_k^2)(T-\Y_k) & \rr_k^2 T +(\rr_k-\rr_k^2)\Y_k
	\end{bmatrix}\right).
\end{equation*}
The range property $\range (\R(\sum_{i=1}^{k}\rr_i X_i))\subseteq \range([R_1\ R_2\ \cdots\ R_{k}])$
follows (for $n=k$) and hence the induction is completed.
\end{proof}

\section{Proof of Theorem~\ref{thm:residual-psd-cond}}\label{append:proof-residual-psd-cond}
\begin{proof}
Clearly, if we can factorize $\mathscr{H}_2$ into the form of
$HH^\TT$ for some $H$, then a factorization of the same form is immediately available for $\R(\rr_1X_1 + \rr_2X_2)$, which is equivalent to say the matrix is positive semi-definite.

If $T$ is positive semi-definite, $\Y_2$ is also positive semi-definite from its construction by the RADI algorithm; see Algorithm~\ref{alg:radi}. We can write $T = F_1F_1^\TT$ and $\Y_2 = F_2F_2^\TT$, and then we have
\begin{align*}
	\mathscr{H}_2  &=
	\begin{bmatrix}
		\rr_1^2F_1F_1^\TT+\rr_1\rr_2F_2F_2^\TT & \rr_1\rr_2(F_1F_1^\TT-F_2F_2^\TT) \\
		\rr_1\rr_2(F_1F_1^\TT-F_2F_2^\TT)    &  \rr_2^2 F_1F_1^\TT +\rr_1\rr_2F_2F_2^\TT
	\end{bmatrix} \\
	& =: \begin{bmatrix}
		\alpha F_1 & \gamma F_2 \\
		\beta F_1    &  \eta F_2
	\end{bmatrix}
	\begin{bmatrix}
		\overline{\alpha} F_1^\TT & \overline{\beta} F_1^\TT \\
		\overline{\gamma} F_2^\TT    &  \overline{\eta} F_2^\TT
	\end{bmatrix},
\end{align*}
where the second identity holds if
\[
\alpha\overline{\alpha} = \rr_1^2,\quad \gamma\overline{\gamma} =\rr_1\rr_2= \overline{\eta}\eta ,\quad
\beta\overline{\beta} = \rr_2^2
\]
and
\[
\alpha \overline{\beta} = \rr_1\rr_2 = -\gamma\overline{\eta}, \quad
\overline{\alpha} \beta = \rr_1\rr_2 = -\eta\overline{\gamma}.
\]
Since $\rr_1$ and $\rr_2$ are real numbers, it holds that
$\overline{\rr_1\rr_2} = \rr_1\rr_2$ and therefore the set of conditions on the scalar coefficients
$\alpha$, $\beta$, $\gamma$, $\eta$ in the complex domain can be simplified as
\begin{equation*}
	|\alpha| = |\rr_1|,\quad
	|\gamma|^2 =  \rr_1\rr_2,\quad
	|\eta|^2 =\rr_1\rr_2,\quad
	|\beta| = |\rr_2|
\end{equation*}
and
\[
\alpha \overline{\beta} = \rr_1\rr_2 = -\gamma\overline{\eta}.
\]
A solution $\{\alpha, \beta, \gamma, \eta\}$ to this set of equations obviously exists if $\rr_1\rr_2\ge 0$: we can simply take
\[
\alpha = \pm \rr_1,\quad
\beta = \pm \rr_2
\]
and
\[
\gamma = \pm \sqrt{\rr_1\rr_2},\quad
\eta = \mp \sqrt{\rr_1\rr_2}
\]
to obtain four different sets of real solutions. In other words, since $\rr_1+\rr_2=1$, we have shown that if $0\le \rr_1,\rr_2 \le 1$, then the intermediate factor $\mathscr{H}_2$ is positive semi-definite and hence the residual of the extrapolant
$\R(\rr_1X_1 + \rr_2X_2)$ is positive semi-definite.

Next, we show by contradiction the condition $0\le \rr_1,\rr_2 \le 1$ on the other hand is a necessary condition for the residual of the extrapolant to be positive semi-definite.
Now suppose $\R(\rr_1X_1 + \rr_2X_2)$ is positive semi-definite, so the intermediate factor $\mathscr{H}_2$ must be positive semi-definite. Suppose, however, the condition $0\le \rr_1,\rr_2 \le 1$ is not satisfied, then from the relation $\rr_1+\rr_2=1$ it holds that
$\rr_1\rr_2<0$, that is,
$\rr_1 <0, \rr_2>1$ with $|\rr_2|>|\rr_1|$
or $\rr_2 <0, \rr_1>1$ with $|\rr_1|>|\rr_2|$. This is to say,
we have $\rr_1\rr_2<0$ with either $|\rr_1\rr_2|>\rr_1^2$ or $|\rr_1\rr_2|>\rr_2^2$.

As a consequence of positive semi-definiteness of $\mathscr{H}_2$, its $(1,1)$-block (one of the leading principal submatrix)
$\rr_1^2F_1F_1^\TT+\rr_1\rr_2F_2F_2^\TT\equiv \rr_1^2T+\rr_1\rr_2\Y_2$ is positive semi-definite and also $\rr_2^2F_1F_1^\TT+\rr_1\rr_2F_2F_2^\TT\equiv\rr_2^2 T +\rr_1\rr_2\Y_2$ is positive semi-definite (just consider the positive semi-definiteness of the Schur complement).
Without loss of generality, suppose, from the above discussion,
$\rr_1\rr_2<0$ holds with $|\rr_1\rr_2|>\rr_1^2$. Then, we have
\begin{equation*}
	\rr_1^2T+\rr_1\rr_2\Y_2 + \rr_1^2\Y_2 - \rr_1^2\Y_2  \succcurlyeq 0 \quad \Longleftrightarrow \quad
	\rr_1^2(T - \Y_2 )  \succcurlyeq (|\rr_1\rr_2| - \rr_1^2)\Y_2,
\end{equation*}
where the latter Loewner order relation implies that
$\rr_1^2(T - \Y_2 )$ is positive semi-definite because
$ (|\rr_1\rr_2| - \rr_1^2)\Y_2$ is positive semi-definite.
On the other hand,
recall from the modified RADI iteration (Algorithm~\ref{alg:radi}) that, with $\sigma_k<0$, we have
$\Y_2 \succcurlyeq T$, which is equivalent to say $T-\Y_2$ is negative semi-definite; a contradiction!

Therefore, we have proved above that
$0\le \rr_1,\rr_2 \le 1$ is a sufficient and necessary condition for the residual of the extrapolant $\R(\rr_1X_1 + \rr_2X_2)$ to be positive semi-definite.
\end{proof}

\bibliographystyle{plainurl}
\bibliography{journals, literature}

\end{document}